\documentclass[11pt]{amsart}
\usepackage{graphicx} % Required for inserting images
\usepackage{amssymb}
\usepackage{amsmath}
\usepackage{stmaryrd}
\usepackage{blindtext}
\usepackage{amsthm}
\usepackage[hidelinks]{hyperref}
\usepackage[english]{babel}
\usepackage{enumitem}
\usepackage[all]{xy}
\usepackage{geometry}
\DeclareMathOperator{\C}{\mathbb C}

\DeclareMathOperator{\Z}{\mathbb Z}
\DeclareMathOperator{\N}{\mathbb N}

\DeclareMathOperator{\g}{\mathfrak g}
\DeclareMathOperator{\h}{\mathfrak h}

\DeclareMathOperator{\ssl}{\mathfrak{sl}}

\DeclareMathOperator{\ad}{\text{ad}}

\DeclareMathOperator{\Id}{\text{Id}}

\DeclareMathOperator{\wt}{\text{wt}}
\DeclareMathOperator{\Aut}{\text{Aut}}

\newcommand{\co}{\left[}
\newcommand{\cf}{\right]}
\newcommand{\po}{\left(}
\newcommand{\pf}{\right)}

\newtheorem{theoreme}{Theorem}[section]

\newtheorem{lemme}[theoreme]{Lemma}
\newtheorem{proposition}[theoreme]{Proposition}
\newtheorem{definition}[theoreme]{Definition}

\newtheorem{remark}[]{Remark}

\title[Coproduct of Drinfeld-Cartan series in type A]{Coproduct of modified Drinfeld-Cartan series for Yangians and quantum affine algebras in type A}
\author{Jérôme Milot}
\date{}
\address{Laboratoire Paul Painlevé, Université de Lille, France}
\email{jerome.milot@univ-lille.fr}

\begin{document}

\begin{abstract}
    We give explicit formulas for the coproducts of modified Drinfeld-Cartan generating series for the Yangian in type $A$ and for the quantum affine algebras in the particular type $A_2$. As an auxiliary result of the latter, we give an explicit presentation of positive prefundamental representations of the quantum affine algebra in the particular type $A_2$. 
\end{abstract}

\maketitle

\tableofcontents{}

\section{Introduction}
Let $\g$ be a complex finite-dimensional simple Lie algebra. For simplicity, we assume $\g$ to be simply laced. Let $I$ the set of its Dynkin nodes, $C = (c_{ij})$ its Cartan matrix and $q$ a non zero complex number which is not a root of unity. Let $Y(\g)$ be the Yangian and $U_q(\hat\g)$ the quantum affine algebra constructed by Drinfeld \cite{Drinfeld85} as Hopf algebra deformations of the current algebra $\g[z]$ and the loop algebra $\g[z,z^{-1}]$ respectively. 

In 1988, Drinfeld gave a new realization of these quantum groups, which is more adapted to study the representation theory \cite{NewDrinfeld}. In this new presentation, the generators of the quantum affine algebra $U_q(\hat \g)$ are encoded in the generating series $x^\pm_i(z), \phi^\pm_i(z)$ for $i \in I$. As an example of commutating relations of these generating series, we have 
\[  \phi_i^-(z) x_j^\pm(w) = \frac{q^{\pm c_{ij}}w-z}{w - q^{\pm c_{ij}}z}\phi^-(z)x_j^\pm(w) \quad \text{ for } i,j \in I. \]
If this new presentation can be equipped with a Hopf algebra structure with the isomorphism between these realizations, the corresponding coproduct is too complicated. A better understanding of these coproducts would lead to explicit ways of studying the representation theory of these quantum groups, notably R-matrices. In this paper, we establish explicit formulas for coproducts for families of Drinfeld new generators in type A which have simpler commutation relations with the other generators.

These generating series arise in the study of the representation theory of quantum affine algebras and their connection with quantum integrable systems. In this context, Baxter introduced commuting operators, called transfer matrices, whose common eigenvalues encode important information about the model. The study of eigenvalues led Frenkel-Reshetikhin \cite{Frenkikhin} to introduce the notion of $q$-character, a quantum analog of classical characters. They conjectured that these eigenvalues can be obtained by computing the q-character of an associated representation and performing a suitable substitution. This conjecture was proved by Frenkel-Hernandez \cite{Frenkandez} using a particular family of representations of the Borel subalgebra, called prefundamental representations. These were introduced by Hernandez-Jimbo \cite{HernandezJimbo} and constructed as limits of Kirillov-Reshetikhin modules. It turns out that the transfer matrices associated with these prefundamental modules play a key role in the substitution procedure described above.

In the same article, Frenkel-Hernandez defined a new family of generating series of the Drinfeld-Cartan subalgebra as the limit of transfer matrices associated with a prefundamental representation. They proved that these series, called \textbf{T-series}, are polynomial on any finite-dimensional representation of $U_q(\hat\g)$. At the level of the algebra, these modified Drinfeld-Cartan series behave particularly well with the other generators. Indeed, for $i,j \in I$:
\[ T_i(z) x_j^-(w)T_i(z)^{-1} = (1 - \delta_{i,j}wz)x_j^-(w)\]
which is a simpler commutation relation than the one for $\phi_i^\pm(z)$. This is one of our motivations to work with these series instead of the standard Drinfeld-Cartan generators. In 2024, Zhang \cite{Zhang} proved that the coproduct T-serias have a compact formula
 \[ \Delta \po T_i(z) \pf = \po 1 \otimes T_i(z) \pf \times  \Theta_i(z) \times \po T_i(z) \otimes 1 \pf \] where each weight component of $\Theta_i(z)$ is polynomial in $z$. 

    \medskip
 
 One of the main results of this article is an explicit formula of the $\Theta_i(z)$ in type $A_2$ (theorem \ref{coproductTseries}):
 \begin{theoreme}
 The Theta series for the quantum affine algebra $U_q(\hat\ssl_3)$ are given by the following two formulas 
      \begin{gather*}
        \Theta_1(z) =\exp_q\po (q-q^{-1})x_{1,0}^-\otimes x_{1,-1}^+z \pf \exp_q\po (q-q^{-1}) \co x_{1,0}^-,x_{2,0}^- \cf_q \otimes \co x_{2,0}^+,x_{1,{-1}}^+ \cf_{q^{-1}} z \pf; \\
          \Theta_2(z) =  \exp_q\po (q-q^{-1}) x_{2,0}^-\otimes x_{2,-1}^+ z \pf  \exp_q\po (q-q^{-1}) \co x_{2,0}^-,x_{1,0}^- \cf_q \otimes \co x_{1,0}^+,x_{2,-1}^+  \cf_{q^{-1}} z \pf
     \end{gather*}
     where $\exp_q$ is a q-exponential and $\co x,y \cf_q = xy - qyx$ is a q-commutator.
 \end{theoreme}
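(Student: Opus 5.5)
Here is how I would prove the $U_q(\hat\ssl_3)$ formula, using Zhang's factorisation $\Delta(T_i(z)) = (1\otimes T_i(z))\,\Theta_i(z)\,(T_i(z)\otimes 1)$ as the starting point. The idea is to pin $\Theta_i(z)$ down by its behaviour on representations rather than by manipulating the Drinfeld coproduct directly. By Zhang's result, each weight component of $\Theta_i(z)$ lies in $U_q(\mathfrak b^-)_{-\beta}\otimes U_q(\mathfrak b^+)_{\beta}$ and is polynomial in $z$. So $\Theta_i(z)$ is determined by a family of elements $\Theta_{i,\beta}(z)$, one for each $\beta\in Q^+$. The Dynkin diagram symmetry $1\leftrightarrow 2$ exchanges the two formulas, so it suffices to treat $i=1$.

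\textbf{Step 1: a characterisation.} The conjugation relation $T_1(z)x_j^-(w)T_1(z)^{-1}=(1-\delta_{1j}wz)x_j^-(w)$, together with the companion relation for $x_j^+$ obtained by inverting, is the key input. Apply $\Delta$ to it and use the known leading-order form of the Drinfeld coproduct, $\Delta(x_j^-(w)) = x_j^-(w)\otimes 1 + \phi^-_j(w)\otimes x_j^-(w)+\cdots$ modulo terms of higher weight in the second factor. This gives a linear functional equation of the form
\[ \Theta_1(z)\,\Delta(x_j^-(w)) = \bigl(\text{explicit twist of } \Delta(x_j^-(w))\bigr)\,\Theta_1(z), \]
which is analogous to the intertwining property characterising the universal $R$-matrix. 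I would prove a uniqueness lemma: an element of the completed tensor product with weight-$0$ component equal to $1$, lying in $\bigoplus_\beta U_q(\mathfrak b^-)_{-\beta}\otimes U_q(\mathfrak b^+)_{\beta}[z]$ and satisfying this equation, is unique. The proof is by induction on the height of $\beta$, with the recursion obtained by comparing weight components against the $x_{j,0}^-$ terms.

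\textbf{Step 2: verification of the candidate.} Next I would check that the proposed product of two $q$-exponentials satisfies the same equation. The first factor $\exp_q((q-q^{-1})x_{1,0}^-\otimes x_{1,-1}^+z)$ is the $\ssl_2$-type piece. For it I would use the standard $U_q(\ssl_2)$ identity $\exp_q(a\otimes b)$ and the commutation relations of $x^+_{1,-1}$ with $x^-_{1,0}$, which involve $\phi^-_{1,-1}$. The second factor carries the root $\alpha_1+\alpha_2$, via $E_{-\alpha_1-\alpha_2}=[x_{1,0}^-,x_{2,0}^-]_q$ and the dual vector $[x_{2,0}^+,x_{1,-1}^+]_{q^{-1}}$. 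Its purpose is to absorb the error term produced when the first factor meets $\Delta(x_2^-(w))$. I would carry out the computation using the PBW-type ordering $\alpha_1<\alpha_1+\alpha_2$, exactly as in the Khoroshkin–Tolstoy factorisation of the $R$-matrix. The Serre relations ensure that $E_{-\alpha_1-\alpha_2}$ $q$-commutes with $x^-_{1,0}$ and $x^-_{2,0}$. This is what makes the product truncate after two factors: no $\alpha_2$-only factor appears, since $T_1$ commutes with $x_2^-$.

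\textbf{Alternative check via prefundamental representations.} The uniqueness lemma may be hard to make rigorous in the completion. In that case I would instead evaluate both sides on tensor products $V\otimes L^+_{1,a}$, where $L^+_{1,a}$ is a positive prefundamental representation given by an explicit presentation (the auxiliary result mentioned in the abstract). On such modules $T_1(z)$ acts by explicit polynomials, and $\Delta(T_1(z))$ can be computed on highest-weight vectors. The operators $x^+_{1,-1}$ and $[x^+_{2,0},x^+_{1,-1}]_{q^{-1}}$ act nilpotently in controlled ways. Faithfulness of the family of such representations on $U_q(\mathfrak b^+)$ would then identify the second tensor factor.

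The main obstacle I expect is Step 2 for the cross term: when $\Theta_1$ is moved past $\Delta(x_2^-(w))$, one needs the exact higher terms of the Drinfeld coproduct in weight $\alpha_1$. I would try first to avoid needing all of $\Delta(x_2^-(w))$ by testing only its zero mode $x_{2,0}^-$, whose coproduct is given by the Drinfeld–Jimbo formula. I would also work inside the Borel subalgebra, where the coproduct of the Chevalley generators is explicit.
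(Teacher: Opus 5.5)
Your main route (intertwining equation, uniqueness lemma, verification of the candidate) is not the paper's, and as written it does not close. An equation derived from the Drinfeld coproduct ``modulo terms of higher weight'' is only a congruence, so it cannot determine $\Theta_1(z)$. Your fallback of testing zero modes only reaches node $2$, where it gives $[\Delta(F_2),\Theta_1(z)]=[\Delta(E_2),\Theta_1(z)]=0$. But $1\otimes 1$ satisfies these too, so everything rests on node $1$. There, conjugation by $T_1(z)$ necessarily shifts modes: $T_1(z)^{-1}x_{1,0}^+T_1(z)=x_{1,0}^+-z\,x_{1,-1}^+$.

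What you are missing is the exact coproduct of $x_{1,-1}^+=[F_2,F_0]_qK_1^{-1}$, which follows from the Drinfeld--Jimbo formulas: $\Delta(x_{1,-1}^+)=x_{1,-1}^+\otimes 1+K_1^{-1}\otimes x_{1,-1}^+ +(q^{-1}-q)F_2K_1^{-1}\otimes F_0K_0$. It gives the exact relation
\[ (E_1\otimes 1+K_1\otimes (E_1-z\,x_{1,-1}^+))\,\Theta_1(z)=\Theta_1(z)\,(\Delta(E_1)-z\,K_1^{-1}\otimes x_{1,-1}^+ +z(q-q^{-1})F_2K_1^{-1}\otimes F_0K_0). \]
The $\alpha_1+\alpha_2$ factor exists to absorb this last term (note $F_0K_0=[x_{2,0}^+,x_{1,-1}^+]_{q^{-1}}$), not a piece of $\Delta(x_2^-(w))$. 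With this relation your plan could become a genuine alternative, mirroring the paper's Yangian argument. However, Step 2 and the uniqueness lemma in the completion would still have to be carried out, and you only sketch them.

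Your alternative check is ill-posed. $T_1(z)$ is built from the $h_{j,-s}$, and both tensor factors of $\Theta_1(z)$ lie in the lower Borel $U_q(\mathfrak c)$, not in the upper one as you assert for the second factor. So $\Delta(T_1(z))$ does not act on $V\otimes L^+_{1,a}$, since a positive prefundamental module is only a $U_q(\mathfrak b)$-module.

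The idea you are missing is how the paper actually uses $L_1$: as the auxiliary space in the first slot of the universal $R$-matrix. Zhang's formula \eqref{thetamonodromy} expresses $\Theta_{1,\beta}(z)=\sum_{\wt(b)=\overline{\beta}}t^+_{b,b_0}(z)\otimes t^-_{b_0,b}(z)K_\beta^{-1}$ through the monodromy entries of $\mathcal R^\pm(z)$ on $L_1$. The paper uses the basis $v_{a,b}=E_1^aE_{\alpha_1+\alpha_2}^b v_{0,0}$ of Proposition \ref{moduleprefondamental}, which has one-dimensional weight spaces, satisfies $E_2v_{0,0}=0$, and on which only $E_{\delta-\alpha_1}$ and $E_0$ survive in $\mathcal R^-(z)$. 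With this basis each entry is a single monomial: $t^+_{v_{a,b},v_{0,0}}\propto F_1^aF_{\alpha_1+\alpha_2}^b$ and $t^-_{v_{0,0},v_{a,b}}\propto F_{\delta-\alpha_1}^aF_0^bz^{a+b}$. The two $q$-exponentials then appear directly, with no functional equation or uniqueness statement needed.

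Finally, the diagram automorphism gives $\psi(T_1(z))=T_2(-z)$, because $\psi(x_{1,-s}^+)=(-1)^sx_{2,-s}^+$. You need to track this sign when deducing $\Theta_2$.
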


 To prove this result, we take advantage of the universal R-matrix and apply it to a prefundamental module over the Borel subalgebra to determine $\Theta_i(z)$. As an auxiliary result, we compute explicitly a basis for a positive prefundamental module of the Borel subalgebra. These modules have already been studied in type $A_2$ by Bazhanov-Hibberd-Khoroshkin \cite{BHK}, and in type $A_n$ for any $n \in \N$ Jang-Kwon-Park \cite{JKP}. Our contribution gives the explicit action of all root vectors on this module.

\medskip

In 2024, Zhang defined an analog of T-series for Yangians, which also behave well with other generators. Note that, at the level of representations, an analog of the T-series has been studied in 2020 by Gautam-Wendlandt \cite{gautamwendlandt} and Hernandez-Zhang \cite{HernandeZhang} independently. These modified Drinfeld-Cartan series can also be factorized in terms of Theta series in essentially the same way, with the same property of local polynomiality. Our second main result is an explicit formula for the $\Theta_i(z)$ in type $A_n$ for any $n\in \N$ (theorem \ref{coproduitSserie}):

\begin{theoreme}
    For $n \in \N$, the Theta series for the Yangian $Y(\ssl_{n+1})$ are given by the following $n$ formulas, for $i \in \{1,\dots,n\}$:
    \[ \Theta_i(z) = \exp \po \sum\limits_{1 \leqslant j \leqslant i < k \leqslant n+1} E_{kj}\otimes E_{jk} \pf \]
    where $E_{jk}$ are elementary matrices in $\ssl_{n+1} \subset Y(\ssl_{n+1})$.
\end{theoreme}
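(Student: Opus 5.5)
The plan is to adapt the strategy the introduction describes for the quantum affine case: use the universal R-matrix and evaluate it on a well-chosen representation. The Yangian setting is simpler because its prefundamental-type objects can be taken to be evaluation-like. Zhang's factorization gives
\[ \Delta(S_i(z)) = (1\otimes S_i(z))\,\Theta_i(z)\,(S_i(z)\otimes 1), \]
where each weight component of $\Theta_i(z)$ is polynomial in $z$. First I will check that $\Theta_i(z)$ lies in a completion of $Y^{\leqslant 0}\otimes Y^{\geqslant 0}$, in the sense that it is a sum of weight components of weight $(-\beta,\beta)$ with $\beta\in Q_+$ and leading term $1\otimes 1$. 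This follows from the triangular form of $\Delta$ on Drinfeld–Cartan generators modulo the ideals $Y X^-\otimes Y X^+$.

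Next I will identify $\Theta_i(z)$ by comparing it with the Cartan part of the universal R-matrix, using the factorization $\mathcal R = \mathcal R^+\mathcal R^0\mathcal R^-$ of Khoroshkin–Tolstoy and Gautam–Toledano Laredo–Wendlandt. The series $S_i(z)$ should arise as a suitable limit or regularized version of $(\mathrm{id}\otimes \mathrm{tr}_{V})\mathcal R(z)$ on a prefundamental-type module $V$ attached to node $i$. Applying $\Delta\otimes \mathrm{id}$ and using quasitriangularity,
\[ (\Delta\otimes\mathrm{id})\mathcal R = \mathcal R_{13}\mathcal R_{23}, \]
I will then express $\Theta_i(z)$ as a partial matrix coefficient of $\mathcal R^-$ taken between lowest- and highest-type vectors of $V$. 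The $z$-independence of the claimed formula then reflects a degeneration: in the limit defining $S_i$, only the $\mathfrak{sl}_{n+1}$-part of the root vectors survives, with the grading shift cancelling.

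A more hands-on alternative avoids R-matrices entirely. Because of the uniqueness of the factorization, it suffices to check that the claimed $\Theta_i$ satisfies the identity after both sides are conjugated by enough elements. The key property is the simple commutation relation
\[ S_i(z)\,x_j^-(w)\,S_i(z)^{-1} = (\text{factor})\cdot x_j^-(w), \]
in which the factor is nontrivial only for $j=i$. I would proceed as follows.
\begin{enumerate}
\item Compute $\Delta(x_{j,0}^\pm)$ and $\Delta(x_{j,1}^-)$ modulo the relevant ideals, using the standard formula $\Delta(h_{i,1}) = h_{i,1}\otimes 1 + 1\otimes h_{i,1} + \sum_{\alpha>0}(\alpha_i,\alpha)\,x_\alpha^-\otimes x_\alpha^+$.
\item Determine $\Theta_i$ recursively on weight components by imposing $\Delta$ of the commutation relation with $x_{j,0}^-$.
\item Recognize the resulting recursion as that satisfied by $\exp(\sum E_{kj}\otimes E_{jk})$.
\end{enumerate}
The roots appearing in the formula, $\epsilon_j-\epsilon_k$ with $j\leqslant i<k$, are exactly the positive roots whose coefficient at $\alpha_i$ equals $1$. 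These root vectors commute pairwise within each tensor factor, since the corresponding $E_{kj}$ generate an abelian nilradical. This abelianness is what collapses the ordered product of $q$-exponentials seen in the quantum affine case into a single ordinary exponential.

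The main obstacle is to justify that no $z$-dependence or higher-loop generators ($x_{\alpha,r}$ with $r\geqslant 1$) survive in $\Theta_i(z)$, and that the normalization makes the exponent exactly $\sum E_{kj}\otimes E_{jk}$ with no extra factors. I would first try to settle this through the weight and degree grading: $S_i(z)$ is homogeneous in a suitable sense for the loop grading $\deg x_{j,r}^\pm = r$, $\deg z = -1$. Combined with polynomiality, this should force each weight component of $\Theta_i$ to be of degree zero. Finally, I would check the coefficient on the fundamental representation $\mathbb C^{n+1}\otimes\mathbb C^{n+1}$, where the computation is explicit, to fix the constants.
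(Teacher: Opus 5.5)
There is a genuine gap: neither of your two routes contains the two ingredients that actually prove the statement.

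The R-matrix route rests on identifications you do not establish. You would need that $S_i(z)$ is a regularized transfer matrix of a Yangian prefundamental-type module, and that $\Theta_i$ is a matrix coefficient of the triangular parts of the Yangian R-matrix. The paper uses such a formula, \eqref{thetamonodromy}, only in the quantum affine case.

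Your hands-on route has the right starting point, and it can indeed produce the paper's system. Apply $\Delta$ to $S_i(z)^{-1}x_{i,0}^+S_i(z)=x_{i,0}^+-z^{-1}x_{i,1}^+$ and multiply by $-z$. Use the \emph{exact} coproduct of $x_{i,1}^+$; its cross terms have first factors built from $\xi_{i,0}$ and the $x^-_{l,0}$ with $l\neq i$, so they commute with $S_i(z)$. After rescaling by $\Theta_{i,\beta}=\Omega_{i,\beta}(-z)^{\beta_i}$ you get exactly the second equation of Proposition~\ref{intertwiningyangian}, and your $x^-$ version is its mirror. These cross terms form the right-hand side of the equation, so they cannot be computed modulo $YX^-\otimes YX^+$.

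What is missing is the following.
\begin{enumerate}
\item \textbf{Uniqueness.} ``Uniqueness of the factorization'' does not imply that an element satisfying these commutator identities equals $\Theta_i$. You need that the linear system has a unique solution with $\Theta_{i,0}=1\otimes 1$. The paper obtains this by induction on the height of $\beta$ (Proposition~\ref{intertwiningyangian}).
\item \textbf{Verification.} You must check that $\exp(y)$, with $y=\sum_{j\le i<k}E_{kj}\otimes E_{jk}$, solves the system; this is Lemma~\ref{commutators}. It needs three facts: $[1\otimes x_{i,0}^+,y]=0$; $\ad_y^3(x_{i,0}^+\otimes 1)=0$, which produces the quadratic term $-\sum E_{kj}\otimes E_{j,i+1}E_{ik}$; and $[x_{i,1}^+,E_{jk}]=E_{ik}E_{j,i+1}$, obtained from $[J(x_i^+),E_{jk}]=0$ through $\varphi$. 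The last identity is what makes $[1\otimes x_{i,1}^+,\exp(y)]$ cancel that quadratic term. Your step ``recognize the resulting recursion'' is precisely this, and it is not done.
\end{enumerate}

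Your treatment of the $z$-independence does not work either. The Yangian is not graded by $\deg x^\pm_{j,r}=r$, only filtered: the relation $[\xi_{i,m+1},x^\pm_{j,p}]-[\xi_{i,m},x^\pm_{j,p+1}]=\pm\frac12 c_{ij}(\xi_{i,m}x^\pm_{j,p}+x^\pm_{j,p}\xi_{i,m})$ mixes degrees $m+p+1$ and $m+p$. Even after reinstating $\hbar$ to get a genuine grading, write $\Omega_{i,\beta}(z)=\sum_k\omega_k z^{-k}$ with $\deg\omega_k=k$. Homogeneity plus polynomiality of $\Theta_{i,\beta}$ still allows every $0\le k\le\beta_i$, so it cannot force $\Theta_{i,\beta}$ to be constant.

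Testing on $\mathbb C^{n+1}\otimes\mathbb C^{n+1}$ cannot fix the answer either. There $y^2$ acts by zero, because $E_{kj}E_{k'j'}=0$ whenever $j\le i<k'$, so $\exp(y)$ and $1+y$ agree on this module.

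In the paper, $z$-independence is not an obstacle at all. The only $z$-dependent term of the system is $z(1\otimes x_{i,0}^+)$, which commutes with $y$. Hence a $z$-independent solution exists, and uniqueness does the rest.
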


Note that the result is better than expected: the weight components of the Theta series do not depend on $z$.

If the result is somewhat similar to the quantum affine algebra case, the arguments involved are different. For the Yangian, the $\Theta_i(z)$ can be interpreted as a homomorphism between some modules over shifted Yangians. The commutation rules between this morphism and the action of some generators give a system of equations with a unique solution that we solve to determine $\Theta_i(z)$.

\medskip 

Our result, in addition to providing coproduct formulas for Drinfeld-Cartan generators, could be used in future works to explicitly compute new R-matrices as in \cite[sections 6 and 10]{Zhang} or to compute coproduct formulas for other modified Drinfeld-Cartan series, notably the truncation series of Gerasimo-Kharchev-Lebedev-Oblezin \cite{GKLO}; the case $\ssl_2$ was studied previously in \cite{Milot}.

\medskip

This paper is organized as follows. In section 1, we recall definitions and properties about Yangians, S-series and Theta series. In section 2, we establish the coproduct formulas for the S-series. A part of the proof, which uses some definitions such as shifted Yangians that will not be recalled in section 1 for the sake of simplicity, is given in the appendix. In section 3, we recall definitions and properties about quantum affine algebras, Borel subalgebra and T-series. In section 4, we introduce the notion of root vectors and the universal R-matrix. We compute the root vectors in the case $\g = \ssl_3$, following the construction of Damiani \cite{Damiani}. In section 5, we provide an explicit basis for the first positive prefundamental representation of $U_q(\mathfrak b)$ for $\g = \ssl_3$, using the basis of subalgebras of $U_q(\mathfrak b)$ formed of root vectors determined in section 4. In section 6, we establish the coproduct formulas for T-series, applying the R-matrix to the prefundamental module. In the appendix, we recall the basic definitions and properties of shifted Yangians and give a proof of the admitted result from section 2.

\subsection*{Acknowledgement}
The author acknowledges the support of the CDP C2EMPI, as well as the French State under the France-2030 programme, the University of Lille, the Initiative of Excellence of the University of Lille, the European Metropolis of Lille for their funding and support of the R-CDP-24-004-C2EMPI project.

The author thanks Huafeng Zhang for fruitful discussions, his insightful comments, and careful reading of the manuscript.

\section*{Notations and conventions}
Note $\N := \Z_{\geq0}, \N^* := \N\setminus \{0\}$. Fix $n \in \N^*$. Let $\g = \ssl_{n+1}$ be the special linear Lie algebra, $\h$ its Cartan-subalgebra and $I= \{1,\dots,n\}$ the set of Dynkin nodes. Let $\Delta := (\alpha_i)_{i\in I}$ be the simple roots and denote by $R$ the set of positive roots. Let $(~,~):\h \times \h^*$ be a non-degenerate symmetric bilinear form. The Cartan matrix $C$ is denoted by 
$c_{i,j} = \frac{2(\alpha_i,\alpha_j)}{(\alpha_i,\alpha_i)}$. In type $A$, this matrix is symmetric. Let the fundamental coweights $(\omega_i^\vee)$ be the basis of $\h$ dual to the basis $(\alpha_i)$, that is, $\langle \alpha_i,\omega_i^\vee \rangle = \delta_{ij}$. Denote $\alpha_{i}^\vee = \sum\limits_{j \in I} c_{ij} \omega_j^\vee$.
\[   P^\vee := \bigoplus_{i\in I} \Z \omega_i^\vee; \quad Q := \bigoplus\limits_{i\in I} \Z\alpha_i; \quad Q_+ := \sum_{i\in I} \N \alpha_i\]
the coweight lattice, the root lattice and the positive root cone. For $i \in I$, define the reflections $(s_i)_{i\in I}$ acting on $Q$ by $s_i(x) := x - \langle x,\alpha_i^\vee \rangle\alpha_i$ for $x \in Q$.

Denote by $\hat\g$ the affine Kac-moody algebra associated with $\g$ and $\hat I = \{0,\dots,n\}$ the set of affine Dynkin nodes. Let $q \in \C^\times$ not be a root of unity. Define for $m \in \Z$ and $k \in \N$:
\[ (m)_q := \frac{q^{2m} - 1}{q^2-1}; \quad (m)_q! := \prod\limits_{s=1}^m [s]_q; \quad [m]_q := \frac{q^m-q^{-m}}{q-q^{-1}}; \quad \left[ \begin{array}{c}
     m  \\
     k 
\end{array}\right]_q := \prod\limits_{s=1}^k [m-s+1 ]_q.\]

Define the quantum Cartan matrix $C(q) = \po [c_{i,j}]_q\pf_{i,j \in I}$. Since $q$ is generic, it is invertible. Denote by $\tilde{C}(q) = \po \tilde C_{i,j}(q) \pf_{i,j \in I}$ its inverse.

\section{Yangians, S-series, Theta series}
In this section, we recall two presentations of the Yangian: the J-presentation and the current presentation. The first one will be useful to simplify some of our computations, the second is the one we will work with. We then introduce series, called S-series, with coefficients generating an important commutative subalgebra: the Drinfeld-Cartan subalgebra. An explicit formula for the coproduct of the standard generators of the Drinfeld-Cartan subalgebra remains open. It has been proved in \cite{Zhang} that the S-series have convenient coproduct formulas, using Theta series which are series with coefficients in the Drinfeld-Cartan subalgebra that can be interpreted as associators for some modules of the Yangian. 

\subsection{Yangians}
In this subsection, we recall the J-presentation, current presentation of Yangian and the isomorphism between these two presentations.

\begin{definition}\cite{Drinfeld85}
    Let $(\cdot,\cdot)$ be an invariant bilinear form on $\g$ and $\po x_\lambda \pf$ be an orthonormal basis of $\g$ with respect to $(\cdot,\cdot)$. The \textbf{J-presentation} of the Yangian $Y^J$ is the algebra defined by generators $x,J(x)$ for $x \in \g$ that satisfy the following relations for $x,y,z \in \g$:
    \begin{gather} xy - yx = \co x,y \cf_{x,y\in \g}; \quad \co x, J(y) \cf = J(\co x,y \cf); \\
    J(ax+by) = aJ(x) + bJ(y); \\
    \co J(x), \co J(y ),z\cf \cf - \co x, \co J(y),J(z) \cf \cf = \sum\limits_{\lambda,\mu,\nu} \po \co x, x_{\lambda} \cf , \co \co y, x_{\mu} \cf, \co z, x_\nu\cf\cf \pf \{ x_{\lambda}, x_\mu, x_\nu\}
    \end{gather}
    where $\{x_1,x_2,x_3\} = \frac{1}{24}\sum\limits_{\sigma\in\mathfrak S_3} x_{\sigma(1)}x_{\sigma(2)}x_{\sigma(3)}$.
\end{definition}

\begin{remark}
    The relation $(3)$ is not exactly the same in the particular case $A_1$ and should be replaced by:
    \[ \co \co J(x_1),J(x_2) \cf, \co x_3,J(x_4) \cf \cf + \co \co J(x_3),J(x_4) \cf, \co x_1,J(x_2)  \cf \cf = \sum\limits_{\lambda,\mu,\nu} \alpha_{\lambda,\mu,\nu} \{ x_{\lambda},x_{\mu},J(x_{\nu})\} \]
    where $\alpha_{\lambda,\mu,\nu} = \po \co x_1,x_\lambda \cf, \co \co x_2, x_\mu \cf, \co \co x_3,x_4 \cf,x_\mu \cf  \cf \pf + \po \co x_4,x_\lambda \cf, \co \co x_3, x_\mu \cf, \co \co x_1,x_2 \cf,x_\mu \cf  \cf \pf$.
\end{remark}

The J-presentation was introduced in 1985 by Drinfeld. It has a simple Hopf algebra structure whose coproduct is given by, for $x \in \g$:
\[
     \Delta(x) = x \otimes 1 + 1 \otimes x; \quad \Delta \po J(x) \pf = J(x) \otimes 1  + 1 \otimes J(x) + \frac{1}{2}\co x\otimes 1, \Omega \cf
\]
where $\Omega$ is the quadratic Casimir element in the adjoint representation of $\g$.

Now, let us define the current presentation, which is the one we will mainly work with. It was introduced by Drinfeld \cite{NewDrinfeld} in 1988. It is often more convenient to work algebraically with this presentation rather than with the J-presentation.

\begin{definition}\cite{NewDrinfeld}
The \textbf{current presentation of the Yangian} $Y$ associated with $\g$ is the associative algebra defined by the generators $x^-_{i,m},x^+_{i,m},\xi_{i,p}$ for all $i \in I$, $m, p \in \N$ satisfying the following relations:
    \begin{gather*}
        \co \xi_{i,p},\xi_{j,m}\cf = 0, \quad \co x^+_{i,p},x^-_{j,m} \cf = \delta_{i,j}\xi_{i,p+m}, \\ 
        \co \xi_{i,m+1},x^\pm_{j,p} \cf - \co \xi_{i,m},x^\pm_{j,p+1} \cf = \pm \frac{1}{2}c_{ij} \po \xi_{i,m} x^\pm_{j,p} + x^\pm_{j,p}\xi_{i,m} \pf,  \\
        \co x^\pm_{i,m+1},x^\pm_{j,p} \cf - \co x^\pm_{i,m},x^\pm_{j,p+1} \cf = \pm\frac{1}{2}c_{ij} \po x^\pm_{i,m} x^\pm_{j,p} + x^\pm_{j,p} x^\pm_{i,m} \pf, \\
        \ad^{1-c_{i,j}}_{x_{i,0}^\pm}(x_{j,0}^\pm) = 0 \quad \quad (i\neq j) 
    \end{gather*}
    where $\ad_x(y) = xy - yx$.

    The Yangian admits a $Q$-grading, called \textbf{weight grading} defined on the generators by $\wt(x_{i,m}^\pm) = \pm\alpha_i;$ and $\wt(\xi_{i,m}) = 0$. For $\beta \in Q$, denote by $Y_\beta$ the subspace of elements in $Y$ of weight $\beta$.

    Define the generating series for $i \in I$:
\[ \xi_i(z) := 1 + \sum\limits_{m \geq 0} \xi_{i,m} z^{-m-1}; \quad x^\pm_i(z) = \sum\limits_{m \geq 0}x_{i,m}^\pm z^{-m-1} .\]
These are power series in $z^{-1}$ of leading terms $1$ and $x_{i,0}^\pm z^{-1}$, respectively. The $\xi_{i,p}$ for $i \in I, p\in \N$ generate a commutative subalgebra of $Y(\g)$, denoted by $Y^0$, called the \textbf{Drinfeld-Cartan subalgebra}.
\end{definition}

For $\alpha \in R$, let $x_\alpha^\pm \in \g$ of weight $\pm \alpha$ be such that $(x_\alpha^+,x_\alpha^-) = 1$ and $x^\pm_{\alpha_i}=x_{i,0}^\pm$. Let us give the isomorphism between these two presentations.

\begin{theoreme}\cite[Theorem 1]{NewDrinfeld}\cite[Theorem 2.6]{GRW}\label{isopresentations}
The algebras $Y^J$ and $Y$ are isomorphic and an isomorphism $\varphi : Y^J \to Y$ between the two presentations is provided by:
\begin{gather*} \varphi(h_i) = \xi_{i,0}; \quad \varphi\po J(h_i)\pf = \xi_{i,1} + \frac{1}{4} \sum\limits_{\alpha \in R} (\alpha,\alpha_i) \{x_\alpha^+,x_\alpha^-\} - \frac{1}{2} \xi_{i,0}^2; \\
\varphi(x_i^\pm) = x_{i,0}^\pm; \quad \varphi(J(x_{i}^\pm)) =  x_{i,1}^\pm \pm \frac{1}{4} \sum\limits_{\alpha \in R}\left\{ \co x_i^\pm,x_\alpha^\pm \cf,x_\alpha^\mp \right\} -\frac{1}{4} \{x_{i,0}^\pm,\xi_{i,0}\}.
\end{gather*}
In particular, for $i \in I$, we have:
$$\varphi(J(x_{i,0}^+)) = x_{i,1}^+ + \frac{1}{4} \po \sum\limits_{k = i+1}^n \po E_{i,k}E_{k,i+1} + E_{k,i+1}E_{i,k} \pf - \sum\limits_{j=1}^{i} \po  E_{j,i+1} E_{i,j} + E_{i,j}E_{j,i+1}\pf \pf$$
where $E_{j,k}$ are the elementary matrices in $\ssl_{n+1} \subset Y(\ssl_{n+1})$. 
\end{theoreme}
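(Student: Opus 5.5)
The first assertion, that $\varphi$ extends to an algebra isomorphism $Y^J \to Y$, is exactly \cite[Theorem 1]{NewDrinfeld} and \cite[Theorem 2.6]{GRW}, and I would take it as given. What needs proof is the ``in particular'' formula for $\varphi(J(x_{i}^+))$. My plan is to specialize the general formula to $\g=\ssl_{n+1}$ and compute explicitly. For the two-element brace I take $\{a,b\} = ab+ba$; this is the normalization under which the general formula of the theorem reproduces the displayed one.

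\textbf{Step 1: fix the data.} Use the trace form on $\ssl_{n+1}$. The positive roots are $\alpha = \varepsilon_j-\varepsilon_k$ with $1\le j<k\le n+1$, and we may take $x_\alpha^+ = E_{jk}$, $x_\alpha^- = E_{kj}$, since $\mathrm{tr}(E_{jk}E_{kj})=1$. Then $x_i^\pm = x_{i,0}^\pm$ corresponds to $E_{i,i+1}$, $E_{i+1,i}$, and $\xi_{i,0} = h_i = E_{ii}-E_{i+1,i+1}$.

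\textbf{Step 2: evaluate the root sum.} From $[E_{i,i+1},E_{jk}] = \delta_{i+1,j}E_{ik} - \delta_{k,i}E_{j,i+1}$, only two families of positive roots contribute.
\begin{itemize}
\item For $j=i+1$ and $k\ge i+2$, the commutator is $E_{ik}$, paired with $x_\alpha^-=E_{k,i+1}$. This gives $\{E_{ik},E_{k,i+1}\}$.
\item For $k=i$ and $j\le i-1$, the commutator is $-E_{j,i+1}$, paired with $E_{ij}$. This gives $-\{E_{j,i+1},E_{ij}\}$.
\end{itemize}
Hence
\[ \tfrac14\sum_{\alpha\in R}\{[x_i^+,x_\alpha^+],x_\alpha^-\} = \tfrac14\Big(\sum_{k=i+2}^{n+1}\{E_{ik},E_{k,i+1}\} - \sum_{j=1}^{i-1}\{E_{j,i+1},E_{ij}\}\Big). \]

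\textbf{Step 3: absorb the Cartan term.} Write
\[ -\tfrac14\{x_{i,0}^+,\xi_{i,0}\} = \tfrac14\{E_{i,i+1},E_{i+1,i+1}\} - \tfrac14\{E_{i,i+1},E_{ii}\}. \]
The first piece is exactly the missing $k=i+1$ term of the first sum. The second piece is exactly the missing $j=i$ term of the second sum. Adding them extends the sums to $k$ from $i+1$ and $j$ from $1$ to $i$, which yields the displayed formula.

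The range of the first sum should naturally run up to $n+1$, since $E_{n+1,i+1}$ exists. I would check this boundary carefully and read the stated upper index accordingly.

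\textbf{Main obstacle.} There is no conceptual difficulty. The only delicate points are bookkeeping of conventions: the normalization of the invariant form, the meaning of the brace, and the sign conventions in \cite{GRW}. I would double-check these by testing the case $n=1$. There both sums reduce to the Cartan correction, and $\varphi(J(x^+))$ must satisfy the $\ssl_2$ relation $[J(h),x^+] = 2J(x^+)$ under $\varphi$.
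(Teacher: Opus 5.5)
Your proposal is correct and is essentially what the paper intends. The paper simply cites Drinfeld and Guay--Regelskis--Wendlandt for the isomorphism, and the type $A$ formula is exactly the specialization you carry out. In that computation the Cartan term $-\tfrac14\{E_{i,i+1},E_{ii}-E_{i+1,i+1}\}$ supplies the $k=i+1$ and $j=i$ terms; only the difference of the two diagonal matrices of $\mathfrak{gl}_{n+1}$ appears, so everything stays in $U(\ssl_{n+1})$.

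You are also right about the boundary. The first sum must run up to $k=n+1$: for $i=n$ the stated range would drop the Cartan correction entirely. The upper limit $n+1$ is also the one the paper uses in the proof of Lemma~\ref{commutators}.
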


The current presentation of the Yangian can be equipped with a Hopf algebra structure by defining $\varphi$ as a Hopf algebra morphism. However, a computable formula for the coproduct of the Drinfeld-Cartan generators remains an open question. For example, for $\ssl_2$ \cite[Definition 2.24]{Molev}:
\[ \Delta(\xi(z)) = \left( 1 \otimes \xi(z) \right) \times  \left( \sum\limits_{k=0}^\infty(-1)^k(k+1)(x^-(z+1))^k \otimes (x^+(z+1))^k \right) \times  \left( \xi(z) \otimes 1 \right).\]

\subsection{S-series and Theta series}
The coproduct of standard Drinfeld-Cartan generators is quite complicated to manipulate algebraically. In 2024, Zhang introduced a new family of Drinfeld-Cartan generators, the S-series \cite{Zhang}, to construct R-matrices for 1-dimensional representations of Yangians. These series are defined by solving a difference equation in the Drinfeld-Cartan subalgebra.

In his article, Zhang proved that the coproducts of the S-series have a more compact formula in terms of Theta series. These series can be interpreted as associators for shifted Yangians, a family of associative algebras introduced by Brundan-Kleshchev \cite{BrundanKleshchev} in type A and \cite{GKLOtermes, BFN} in full generality that generalizes standard Yangians. One of the main results of this article is an explicit formula for the Theta series and the coproduct of S-series in type A.

First, to introduce S-series, we need the series of Gerasimov–Kharchev–Lebedev–Oblezin \cite[lemma 2.1]{GKLO}, called GLKO-series. These are Laurent series $A_i(z) \in 1 + z^{-1}Y^0[[z^{-1}]]$ for $i \in I$ uniquely determined by the following difference equations for $i \in I$:
    \[ \xi_i(z) = \frac{A_{i-1}(z-\frac{1}{2})A_{i+1}(z-\frac{1}{2})}{A_i(z)A_i(z-1)} \in 1 + z^{-1}Y^0[[z^{-1}]]\]
    with $A_{-1}(z) = A_{n+1}(z) = 1$. Introduce the GKLO-elements $$A_i(z) = \exp\po \sum\limits_{m \geq 0} a_{i,m}z^{-m-1} \pf.$$
\begin{remark}
    The GKLO-series have their own interest: some quotients of Yangians, called truncated Yangians, arose from the GKLO series. An explicit formula for their coproduct is expected to be relatively simple (for example, it is in type $A_1$ \cite{Milot}). To compute it, one can compute the formulas for the GKLO-series from the coproducts of the S-series.
\end{remark}

Now, let us recall the S-series from \cite[proposition 4.1]{Zhang}. For $i \in I$, there is a unique power series $S_i(z)$ in $1 + z^{-1}Y^0[[z^{-1}]]$ satisfying the difference equation 
    \[ S_i(z+1) = S_i(z)\times A_i(z) \times \exp \po a_{i,0} \sum\limits_{k>0}\frac{(-z)^{-k}}{k} \pf.\]
Note that the S-series behave particularly well with Drinfeld generators \cite[proposition 4.1]{Zhang}. For $i \neq j \in I$, the series $S_i(z)$ commutes with $x_{j,m}^\pm$ for $m \in \N$, and for $j = i$ we have:
    \[
        S_i(z)x_{i,m}^-S_i(z)^{-1} = x_{i,m}^- - x_{i,m+1}^-z^{-1}; \quad
        S_i(z)^{-1}x_{i,m}^+S_i(z) = x_{i,m}^+ - x_{i,m+1}^+z^{-1}.
    \]
    
\begin{remark}
    In \cite[Theorem 4.6]{Zhang}, a T-operator is defined as the composition of the S-series with a (quite simple) linear automorphism. At the level of representations, this T-operator is thought to be similar to the T-series introduced in \cite{Frenkandez} in the case of quantum affine algebras. It can be used to construct new R-matrices \cite[Section 6]{Zhang}. Yet, in this article, we are mainly interested in the algebra level, so we will not say more about these operators.
\end{remark}

Now, let us discuss the coproduct of the S-series. In \cite{Zhang}, it was proved that the coproduct has a compact form as follows. For $i \in I$:
\[ \Delta(S_i(z)) = \po 1 \otimes S_i(z) \pf \times \Omega_i(z) \times \po S_i(z) \otimes 1 \pf \]
    with $\Omega_i(z) \in 1\otimes 1 + z^{-1} \po Y\otimes Y\pf[[z^{-1}]] $. 
    
    For $\beta \in Q_+$, define $\Omega_{i,\beta}(z) \in \po Y_{-\beta}\otimes Y_\beta \pf[[z^{-1}]]$ as the $(-\beta,\beta)$-component of $\Omega_i(z)$. Then $\Omega_i(z) = \sum\limits_{\beta \in Q_+}\Omega_{i,\beta}(z)$ and, according to lemma 5.1 \cite{Zhang}, $\Omega_{i,0} = 1\otimes 1$.

Now, let us introduce the \textbf{Theta series} \cite[Definition 5.2]{Zhang}:
\[ \forall \beta \in Q_+: \quad \Theta_{i,\beta}(z) = \Omega_{i,\beta}(z) (-z)^{\beta_i := \langle \omega_i^\vee , \beta \rangle}. \] Note that $\Theta_{i,0}(z) = 1\otimes 1$. Let us then consider the following series:
\[ \Theta_i(z) = \sum\limits_{\beta \in Q_+} \Theta_{i,\beta}(z) \in \prod\limits_{\beta\in Q_+} \po Y_{-\beta}\otimes Y_\beta \pf((z^{-1})).\]

The interest of $\Theta_i(z)$ is that it can be interpreted as associators for modules over shifted Yangians (see \cite[section 5.2]{Zhang}). From this property, one obtains the \textbf{intertwining relations}: a system of equations that admits a unique solution, which is the Theta series. These relations, which can be quite complicated to write, have an important corollary: the weight components $\Theta_{i,\beta}(z)$ are polynomial in $z$. However, in type A, these relations are computable.

\section{Formula for the coproduct of S-series in type A}
We establish the main result of this paper for the Yangian: an explicit formula for the coproduct of the S-series. We use the fact that the Theta series are module homomorphisms, so that they must commute with the action of the generators. It turns out that this commutation property in type A gives a computable system of $n$ equations that determines $\Theta_i(z)$. Solving this system gives a simple expression of the Theta series, depending on elementary matrices, from which we deduce the formula for the coproduct of the S-series.

The first main result of this paper is the formula for the Theta series. 
\begin{theoreme}\label{coproduitSserie}
    For $n \in \N$, the Theta series for the Yangian $Y(\ssl_{n+1})$ are given by the following $n$ formulas, for $1 \leqslant i \leqslant n$:
    \[ \Theta_i(z) = \exp\left( \sum\limits_{1\leqslant j \leqslant i < k \leqslant n+1} E_{kj}\otimes E_{jk} \right) \]
    from which one can deduce the formulas for the coproduct of the S-series:
    \[ \Delta(S_i(z)) =  (1 \otimes S_i(z) )\exp \po- \sum\limits_{1 \leqslant j \leqslant i < k \leqslant n+1} E_{kj}\otimes E_{jk}  z^{-1}\pf (S_i(z) \otimes 1)\]
    where $E_{j,k}$ are the elementary matrices of $\ssl_{n+1} \subset Y(\ssl_{n+1})$.
\end{theoreme}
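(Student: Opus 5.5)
The plan is to exploit the characterization from \cite[section 5.2]{Zhang}: $\Theta_i(z)$ is the unique element of $\prod_{\beta\in Q_+}(Y_{-\beta}\otimes Y_\beta)((z^{-1}))$ with $\Theta_{i,0}=1\otimes 1$ satisfying the intertwining relations. These relations express that $\Theta_i(z)$ commutes with the coproduct-type action of the generators $x_{j,m}^\pm$ on a tensor product of shifted-Yangian modules. Concretely, I would first derive them directly from the definition. Start from $\Delta(S_i(z)) = (1\otimes S_i(z))\Omega_i(z)(S_i(z)\otimes 1)$, conjugate $\Delta(x_{j,0}^\pm)=x_{j,0}^\pm\otimes1+1\otimes x_{j,0}^\pm$ (and also $\Delta(J(x_{j}^\pm))$ from the J-presentation) by $\Delta(S_i(z))$, and use the commutation rules $S_i(z)x^-_{i,m}S_i(z)^{-1}=x_{i,m}^--x_{i,m+1}^-z^{-1}$, together with the fact that $S_i(z)$ commutes with $x_{j,m}^\pm$ for $j\neq i$.

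For $j\neq i$ this yields $[\Theta_i(z),\Delta(x_{j,0}^\pm)]=0$. For $j=i$, after multiplying the weight components by $(-z)^{\beta_i}$, it yields a twisted relation of the form
\[
\Theta_i(z)\bigl(x_{i,0}^-\otimes 1 + 1\otimes(x_{i,0}^- - x_{i,1}^-z^{-1})\bigr)\ \text{vs.}\ \bigl(\cdots\bigr)\Theta_i(z).
\]
Here the $x_{i,1}^\pm$ terms are rewritten through $\varphi(J(x_i^\pm))$ using the explicit formula of Theorem \ref{isopresentations}, so that the whole system becomes a set of $n$ equations involving only $\ssl_{n+1}$, $J(x_i^\pm)$ and $\Delta$. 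Since the expected answer is $z$-independent, I would then check that all $z$-dependence cancels for the candidate.

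For the candidate $\Theta=\exp(X_i)$ with $X_i=\sum_{j\le i<k}E_{kj}\otimes E_{jk}$, I would verify the equations directly. The key structural facts are:
\begin{enumerate}[label=(\roman*)]
\item The span of $\{E_{jk}\}_{j\le i<k}$ is the abelian nilradical $\mathfrak u_i$ of the maximal parabolic associated with node $i$, and similarly for the $E_{kj}$. Hence all summands of $X_i$ commute, and $\exp(X_i)$ is a well-defined, weight-graded, finite-in-each-weight sum.
\item The element $X_i$ is invariant under the Levi $\mathfrak{gl}_i\oplus\mathfrak{gl}_{n+1-i}$ acting diagonally. This gives commutation with $\Delta(x_{j,0}^\pm)$ for $j\neq i$ and with $\Delta(h)$.
\item For $j=i$, one needs $\exp(\ad X_i)$ applied to $\Delta(x_{i,0}^\pm)$ and $\Delta(J(x_i^\pm))$. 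Since $\ad X_i$ is nilpotent of low order on these (it raises the $\mathfrak u_i$-degree, which is at most $2$ in $\ssl_{n+1}$ here), the series truncates. The resulting quadratic terms should match exactly the sums $\sum E_{i,k}E_{k,i+1}$ appearing in $\varphi(J(x_{i,0}^+))$.
\end{enumerate}
Uniqueness of the solution of the intertwining system, as stated in \cite{Zhang}, then gives $\Theta_i(z)=\exp(X_i)$. The coproduct formula follows by substituting: the $(-\beta,\beta)$-component of $\exp(X_i)$ has $\beta_i$ equal to the number of factors, so $\Omega_{i,\beta}=\Theta_{i,\beta}(-z)^{-\beta_i}$ replaces $X_i$ by $-X_i z^{-1}$.

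The main obstacle is step (iii), the $j=i$ relation. It requires precise control of the shifted-Yangian intertwining equation, including the exact twist and the normalization of $x_{i,1}^\pm$ in terms of $J$. It also requires a careful computation of the quadratic correction terms $\frac14\{\cdot,\cdot\}$ and $\frac12[x\otimes1,\Omega]$ against $\ad X_i$. I would first carry out this computation for $\ssl_2$ and $\ssl_3$, $i=1$, to fix signs, and then do general $n$ by organizing the terms by the index pairs $(j,k)$ with $j\le i<k$.
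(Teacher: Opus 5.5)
Your proposal is correct. Its verification half is the same as the paper's Lemma \ref{commutators}:
\begin{itemize}
\item the summands of $X_i$ commute;
\item $\exp(-\ad X_i)(x_{i,0}^+\otimes 1)$ stops at the quadratic term;
\item that quadratic term cancels against $[1\otimes x_{i,1}^+,\exp X_i]$, which comes from $[J(x_i^+),E_{jk}]=0$, i.e. $[x_{i,1}^+,E_{jk}]=E_{ik}E_{j,i+1}$;
\item the $z$-term vanishes because $\mathfrak u_i$ is abelian.
\end{itemize}
Uniqueness is then quoted as in the paper.

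The genuine difference is how you obtain the intertwining system. The paper derives Proposition \ref{intertwiningyangian} in the appendix from Zhang's interpretation of $\Theta_i(z)$ as a morphism between completed tensor products of $Y_{\omega_i^\vee}$-modules. It computes the shifted coproduct $\Delta_{\omega_i^\vee,0}(x_{i,0}^+)$ by a zigzag through the shift homomorphism. You instead apply $\Delta$ to $S_i(z)^{-1}x_{i,0}^+S_i(z)=x_{i,0}^+-x_{i,1}^+z^{-1}$ and insert the factorization of $\Delta(S_i(z))$. Set $\Gamma_i:=\Delta(x_{i,1}^+)-x_{i,1}^+\otimes 1-1\otimes x_{i,1}^+$. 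Its first tensor factors lie in $Y^0$ or in the subalgebra generated by the $x_{l,0}^-$ with $l\neq i$, so they commute with $S_i(z)$. The $(-z)^{\beta_i}$-twist then gives
\[ \bigl[x_{i,0}^+\otimes 1+1\otimes x_{i,1}^+-z(1\otimes x_{i,0}^+),\,\Theta_i(z)\bigr]=\Theta_i(z)\,\Gamma_i , \]
which is exactly the second relation of Proposition \ref{intertwiningyangian}. The $j\neq i$ relations come out the same way.

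What each route buys:
\begin{itemize}
\item Yours is self-contained: no shifted Yangians, shifted coproducts or injectivity of shift maps are needed, only the uniqueness induction of the appendix.
\item The paper's gives the conceptual picture of $\Theta_i(z)$ as an associator, which is the framework in which Zhang derives uniqueness and polynomiality.
\end{itemize}
Likewise, you observe that $X_i$ is the canonical element of $\mathfrak u_i^-\otimes\mathfrak u_i$ for the trace form, hence invariant under the Levi $\mathfrak l$. This settles all $j\neq i$ relations (and those for $x_{j,0}^-$ and $\h$) at once, whereas the paper checks $j<i$ and $j>i$ by hand.

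Three corrections for the write-up.
\begin{itemize}
\item Your displayed $j=i$ relation is off. Conjugation by $S_i(z)\otimes 1$ puts the shifted $x_i^-$ in the \emph{first} tensor factor, and after the twist the relation is polynomial in $z$: $\Theta_i(z)\,(x_{i,1}^-\otimes 1+1\otimes x_{i,0}^--z\,x_{i,0}^-\otimes 1)=(\cdots)\,\Theta_i(z)$.
\item Use the $x^+$ version above, since that is literally the system whose uniqueness Proposition \ref{intertwiningyangian} asserts. With the $x^-$ relation alone you would have to redo the height induction on the second tensor factor.
\item Step (iii) only needs $\ad X_i$ applied to $x_{i,0}^+\otimes 1$ and $1\otimes x_{i,1}^+$, not to the whole of $\Delta(J(x_i^\pm))$.
\end{itemize}
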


\begin{remark}
    The expression of $\Theta_i(z)$ was known to be locally polynomial in $z$, but the result is even more convenient since it does not depend on $z$. Moreover, note that the terms in the sum commute.
\end{remark}

We need some preliminary results to establish this result. Let us begin with the system of equations that the Theta series $\Theta_i(z)$ have to satisfy.

\begin{proposition}\label{intertwiningyangian}
    For $i \in I$, the Theta series $\Theta_i(z)$ is the unique series in $\prod\limits_{\beta\in Q_+} \po Y_{-\beta}\otimes Y_\beta \pf((z^{-1}))$ with the $(0,0)$-weight component equal to $1 \otimes 1$ that satisfies the following $n$ equations for $j \in I$:
    \begin{gather*}
        \text{For } j \neq i: \quad \co x_{j,0}^+ \otimes 1 + 1 \otimes x_{j,0}^+, \Theta_i(z) \cf = 0; \\
        \co x_{i,0}^+ \otimes 1 + 1\otimes x_{i,1}^+ -  z (1 \otimes x_{i,0} ^+) , \Theta_i(z)  \cf =  \Theta_i(z) \po   \sum\limits_{j= 1}^{i} E_{i,j} \otimes E_{j,i+1}  - \sum\limits_{k=i+1}^{n+1}E_{k,i+1}\otimes E_{i,k} \pf.
    \end{gather*}
\end{proposition}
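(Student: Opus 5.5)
Throughout, fix $i \in I$. The plan is to derive the intertwining relations directly from the defining factorization $\Delta(S_i(z)) = (1\otimes S_i(z))\,\Omega_i(z)\,(S_i(z)\otimes 1)$, using the conjugation formulas for $S_i(z)$ recalled from \cite[proposition 4.1]{Zhang}. Uniqueness will come from a triangularity argument on the weight grading.

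\textbf{Relations for $j \neq i$.} Since $\Delta$ is an algebra morphism, $\Delta(S_i(z))$ commutes with $\Delta(x_{j,0}^+) = x_{j,0}^+\otimes 1 + 1\otimes x_{j,0}^+$. This coproduct formula holds because $\varphi(x_j^+) = x_{j,0}^+$ and $x_j^+$ is primitive in $Y^J$. For $j\neq i$, $S_i(z)$ commutes with $x_{j,0}^+$. So we can cancel the outer factors $1\otimes S_i(z)$ and $S_i(z)\otimes 1$, which gives $[\Delta(x_{j,0}^+),\Omega_i(z)]=0$. Multiplying each weight component by the scalar $(-z)^{\beta_i}$ then gives the first equation for $\Theta_i(z)$.

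\textbf{Relation for $j = i$.} Here we use $S_i(z)^{-1}x_{i,0}^+S_i(z) = x_{i,0}^+ - x_{i,1}^+z^{-1}$. We also need $\Delta(x_{i,1}^+)$, computed from Theorem \ref{isopresentations}: write $x_{i,1}^+ = \varphi(J(x_i^+)) - \tfrac14(\text{quadratic terms})$, and use $\Delta(J(x)) = J(x)\otimes 1 + 1\otimes J(x) + \tfrac12[x\otimes 1,\Omega]$. The quadratic terms in elementary matrices have coproducts whose cross terms are explicit. Collecting everything should give
\[\Delta(x_{i,1}^+) = x_{i,1}^+\otimes1 + 1\otimes x_{i,1}^+ + \sum_{j\le i} E_{i,j}\otimes E_{j,i+1} - \sum_{k>i} E_{k,i+1}\otimes E_{i,k},\]
possibly with the roles of the two tensor factors exchanged. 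Checking this, including the signs and the factor $\tfrac12$ from $\Omega$ against the $\tfrac14\{\cdot,\cdot\}$ terms, is the main computational obstacle.

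Next, conjugate $\Delta(x_{i,0}^+)$ by $\Delta(S_i(z))$. Using $\Delta(S_i(z)^{-1}x_{i,0}^+S_i(z)) = \Delta(x_{i,0}^+) - z^{-1}\Delta(x_{i,1}^+)$ and the factorization, we move the outer factors across. On the first tensor factor we use $S_i(z)\otimes 1$, on the second we use $1\otimes S_i(z)$, and the $x_{i,1}^+$ terms are produced by the conjugation formula. Comparing both sides yields a relation of the form
\[\Omega_i(z)\bigl(x_{i,0}^+\otimes 1 + 1\otimes x_{i,0}^+ - z^{-1}x_{i,1}^+\otimes 1\bigr) = \bigl(x_{i,0}^+\otimes1 + 1\otimes x_{i,0}^+ - z^{-1}(1\otimes x_{i,1}^+)\bigr)\Omega_i(z) + z^{-1}\Omega_i(z)\,(\text{cross terms}),\]
up to the correct placement of the terms. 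Multiplying by $-z$ and passing to $\Theta_i(z)$ converts the factor $(-z)^{\beta_i}$ into the normalisation stated in the proposition. The fact that the $z(1\otimes x_{i,0}^+)$ term appears in the bracket while $x_{i,0}^+\otimes 1$ does not should come exactly from this rescaling: commuting $x_{i,0}^+$ shifts $\beta_i$ by one. Bookkeeping which tensor leg carries the shift will determine the precise form of the bracket.

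\textbf{Uniqueness.} Suppose two solutions have the same $(0,0)$-component, and let $D$ be their difference. Then $D$ satisfies the homogeneous system $[X_j, D]=0$ for $j\neq i$, and $[X_i(z),D] = D\,C$, where $C$ is the cross term; $C$ has weight $(-\alpha_i,\alpha_i)$-type components. Take a component $D_\beta$ with $\beta$ minimal among the nonzero components. Projecting the equations onto the appropriate weight shows that $(1\otimes x_{j,0}^+)D_\beta$, and the corresponding term for $x_{i,0}^+$ with the highest power of $z$, is determined by components of lower weight, hence vanishes. Then $D_\beta$ has second leg annihilated by left multiplication by all $x_{j,0}^+$. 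Since $Y$ has no zero divisors (it is a PBW-type deformation, via its associated graded), this forces $D_\beta=0$ for $\beta\neq0$. If the torsion-free argument is delicate, I would instead invoke the associator interpretation from \cite[section 5.2]{Zhang}, as that paper does.
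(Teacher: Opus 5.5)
Your derivation of the two equations takes a genuinely different route from the paper's, and it works once $\Delta(x_{i,1}^+)$ is pinned down.

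\textbf{Existence of the equations.} The paper uses Zhang's Theorem 5.6, which makes $\Theta_i(z)$ a morphism of $Y_{\omega_i^\vee}$-modules between completed triple tensor products built from shifted coproducts. It then computes $\Delta_{\omega_i^\vee,0}(x_{i,0}^+)$ by a zigzag through the shift map $x_{i,0}^+\mapsto x_{i,1}^+$. You bypass shifted Yangians entirely by conjugating $\Delta(x_{j,0}^+)$ with the factorization of $\Delta(S_i(z))$. Both routes need $\Delta(x_{i,1}^+)$. Your first guess is the right one: first legs in $\mathfrak b^-$, with cross term $C=\sum_{j\le i}E_{i,j}\otimes E_{j,i+1}-\sum_{k>i}E_{k,i+1}\otimes E_{i,k}$.

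Your displayed intermediate identity is wrong, however: no $x_{i,1}^+\otimes 1$ survives. Set $X=x_{i,0}^+\otimes 1+1\otimes x_{i,0}^+-z^{-1}(1\otimes x_{i,1}^+)$. Then $\Delta(x_{i,0}^+)(1\otimes S_i(z))=(1\otimes S_i(z))X$. On the other side, $(S_i(z)\otimes 1)(\Delta(x_{i,0}^+)-z^{-1}\Delta(x_{i,1}^+))(S_i(z)\otimes 1)^{-1}=X-z^{-1}C$, for two reasons:
\begin{itemize}
\item on the first leg, $\sum_{k\ge 0}z^{-k}x_{i,k}^+-z^{-1}\sum_{k\ge 0}z^{-k}x_{i,k+1}^+$ telescopes to $x_{i,0}^+$;
\item $S_i(z)$ commutes with the first legs of $C$. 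These are $\xi_{i,0}$ (from the $j=i$ and $k=i+1$ terms) and elements generated by the $x_{m,0}^-$ with $m\neq i$.
\end{itemize}
Hence $[X,\Omega_i(z)]=-z^{-1}\Omega_i(z)C$, with the same $X$ on both sides. The components of $C$ have weight $(-\gamma,\gamma+\alpha_i)$ with $\langle\omega_i^\vee,\gamma\rangle=0$. So multiplying the $(\alpha_i-\beta,\beta)$-component by $(-z)^{\beta_i}$ gives exactly the second equation.

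\textbf{Uniqueness: a genuine gap.} Project the equations for the difference $D$ onto weight $(\alpha_j-\beta,\beta)$. For $j\neq i$ this gives $[x_{j,0}^+\otimes 1,D_\beta]+[1\otimes x_{j,0}^+,D_{\beta-\alpha_j}]=0$. For $j=i$, every term other than $[x_{i,0}^+\otimes 1,D_\beta]$ involves only $D_{\beta-\alpha_i}$ and $D_{\beta-\alpha_i-\gamma}$.

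So at a minimal $\beta$, what vanishes is the commutator on the \emph{first} leg. The second-leg terms $[1\otimes x_{j,0}^+,D_\beta]$ and $z[1\otimes x_{i,0}^+,D_\beta]$ are paired with the unknown higher components $D_{\beta+\alpha_j}$, so they are not controlled. Moreover, these are commutators, not left multiplications, so the absence of zero divisors in $Y$ is irrelevant.

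What you actually need is this: an element of $Y_{-\beta}$, with $\beta\in Q_+\setminus\{0\}$, that commutes with all $x_{j,0}^+$ is zero. This holds because the adjoint action of $\mathfrak g\subset Y$ on $Y$ is locally finite, since $Y$ is generated by the finite-dimensional $\mathfrak g$-module $\mathfrak g\oplus J(\mathfrak g)$. Such an element would therefore be a highest-weight vector of the non-dominant weight $-\beta$ in a finite-dimensional $\mathfrak g$-module. Apply this lemma coefficientwise to the first legs of $D_\beta$; induction on the height of $\beta$ then gives uniqueness. This is the induction the paper runs, following Zhang's Theorem 5.7, step 2. Falling back on the associator interpretation only yields the equations, not their uniqueness.
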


\begin{remark}
    This proposition essentially follows from the proof of \cite[Theorem 5.7]{Zhang}. For completeness, we will provide more details on the intertwining property and the uniqueness in the appendix.
\end{remark}

Let us solve these equations in order to prove the theorem. We need the following results. First, we introduce a completion of $Y \otimes Y$:
\[ Y \otimes_z Y := \sum\limits_{\alpha,\beta \in Q} \po \prod\limits_{\gamma \in Q_+} \po Y_{(\alpha-\gamma)} \otimes Y_{(\beta+\gamma)} \pf((z^{-1})) \pf\]
which is an algebra. Note that $\Theta_i(z) \in Y\otimes_z Y$.

\begin{lemme}\label{commutators}
    Fix $i\in I$. Let $1 \leqslant j \leqslant i < k \leqslant n+1$. Denote $y = \sum\limits_{1 \leqslant j \leqslant i < k \leqslant n+1} E_{k,j}\otimes E_{j,k} \in Y\otimes Y$. In $Y \otimes_z Y$, we have the following relations:
    \begin{align*} 
    \co 1\otimes x_{i,0}^+, \exp(y) \cf &= 0; \\
    \co x_{i,0}^+ \otimes 1, \exp(y) \cf &= \exp(y) \po \sum\limits_{j= 1}^i E_{i,j} \otimes E_{j,i+1} - \sum\limits_{k=i+1}^{n+1}E_{k,i+1}\otimes E_{i,k} - \sum\limits_{\substack{1 \leqslant j \leqslant i \\ i < k \leqslant n+1}} E_{k,j} \otimes E_{j,i+1}E_{i,k} \pf \\
    \co x_{i,1}^+, E_{j,k} \cf &= E_{i,k}E_{j,i+1}; \\
    \co 1 \otimes x_{i,1}^+, \exp(y)  \cf &= \exp(y)\po\sum\limits_{1 \leqslant j\leqslant i < k \leqslant n+1} E_{k,j} \otimes E_{j,i+1}E_{i,k} \pf.
    \end{align*}
\end{lemme}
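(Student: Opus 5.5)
The plan is to reduce everything to explicit computations in $U(\ssl_{n+1})\otimes U(\ssl_{n+1})$ with elementary matrices, using $x_{i,0}^+=E_{i,i+1}$ and $[E_{ab},E_{cd}]=\delta_{bc}E_{ad}-\delta_{da}E_{cb}$. Two preliminary observations will be used throughout.
\begin{itemize}
\item[(a)] The summands $E_{kj}\otimes E_{jk}$ of $y$ pairwise commute. For $j,j'\leqslant i<k,k'$ one has $[E_{kj},E_{k'j'}]=\delta_{jk'}E_{kj'}-\delta_{j'k}E_{k'j}=0$, because a column index $\leqslant i$ never equals a row index $>i$. The same argument gives $[E_{jk},E_{j'k'}]=0$. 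Hence the first tensor factors of all summands commute among themselves, and so do the second tensor factors.
\item[(b)] For any $A$ in the completion, $A\exp(y)-\exp(y)A=\exp(y)\bigl(e^{-\ad_y}(A)-A\bigr)=\exp(y)\bigl(-[y,A]+\tfrac12[y,[y,A]]-\cdots\bigr)$. Each commutator relation therefore amounts to computing iterated brackets with $y$ and checking that the series truncates.
\end{itemize}

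\textbf{First relation.} For $j\leqslant i<k$ we have $[E_{i,i+1},E_{jk}]=\delta_{i+1,j}E_{ik}-\delta_{ki}E_{j,i+1}=0$. So $1\otimes x_{i,0}^+$ commutes with every summand of $y$, hence with $\exp(y)$.

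\textbf{Second relation.} Compute $[y,x_{i,0}^+\otimes1]=-\sum_{j,k}[E_{i,i+1},E_{kj}]\otimes E_{jk}$. Since $[E_{i,i+1},E_{kj}]=\delta_{k,i+1}E_{ij}-\delta_{ji}E_{k,i+1}$, the first-order term is
\[
Z_1=\sum_{j\leqslant i}E_{ij}\otimes E_{j,i+1}-\sum_{k>i}E_{k,i+1}\otimes E_{ik}.
\]
Next compute $[y,Z_1]$ via $[a\otimes b,c\otimes d]=ac\otimes bd-ca\otimes db$. Here $a,c$ do not commute in general, so one must expand and use (a) to reorder. I expect
\[
[y,Z_1]=-2\sum_{j\leqslant i<k}E_{kj}\otimes E_{j,i+1}E_{ik},
\]
possibly up to recombining terms; the factor $\tfrac12$ in (b) then produces exactly the third sum in the statement. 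Finally, check that $[y,[y,Z_1]]=0$. By (a) and the computation in the fourth relation, $E_{kj}$ commutes with the first factors of $y$ and $E_{j,i+1}E_{ik}$ commutes with the second factors, so this should hold. This bookkeeping, with noncommuting tensor factors and sign conventions, is the step I expect to be the main obstacle. I would first do it for $n=2$ as a sanity check.

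\textbf{Third relation.} Use Theorem \ref{isopresentations}. Since $[E_{jk},J(E_{i,i+1})]=J([E_{jk},E_{i,i+1}])=0$ by the first computation, we get
\[
[x_{i,1}^+,E_{jk}]=-\tfrac14\bigl[Q,E_{jk}\bigr],
\]
where $Q$ is the explicit quadratic correction in Theorem \ref{isopresentations}. Expanding with the elementary matrix brackets, only the terms with indices $j$ or $k$ survive, and they should sum to $E_{ik}E_{j,i+1}$. Note that $E_{ik}$ and $E_{j,i+1}$ commute (since $k\neq j$ and $i\neq i+1$), so ordering is harmless.

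\textbf{Fourth relation.} By the third relation, $[1\otimes x_{i,1}^+,y]=\sum_{j\leqslant i<k}E_{kj}\otimes E_{ik}E_{j,i+1}$, and by (b) this gives the first-order term. The higher brackets vanish: $E_{j'k'}$ commutes with both $E_{ik}$ and $E_{j,i+1}$ for $j'\leqslant i<k'$, as the same index check shows, and the first factors commute by (a). So the exponential series truncates and yields the stated formula.
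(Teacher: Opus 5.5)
Your proposal follows the paper's proof step for step. The first relation comes from the derivation property. The second and fourth come from the expansion $\exp(-y)A\exp(y)=e^{-\ad_y}(A)$, which truncates after second order. The third comes from $[J(x_{i,0}^+),E_{jk}]=J([x_{i,0}^+,E_{jk}])=0$ combined with Theorem~\ref{isopresentations}. Your index checks in (a), in the first relation, and in the expansion of $[Q,E_{jk}]$ are correct.

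The one thing to fix is the sign you anticipate in the step you flagged. With your convention (b), the second-order term is $\tfrac12[y,[y,A]]=-\tfrac12[y,Z_1]$. So the value $[y,Z_1]=-2\sum E_{kj}\otimes E_{j,i+1}E_{ik}$ would produce the third sum with a plus sign, contradicting the statement. The correct value is
\[
[y,Z_1]=2\sum_{1\leqslant j\leqslant i<k\leqslant n+1}E_{kj}\otimes E_{j,i+1}E_{ik},
\]
that is, $[Z_1,y]=-2\sum E_{kj}\otimes E_{j,i+1}E_{ik}$, as in the paper.

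The bookkeeping is lighter than you fear. For every summand $a\otimes b$ of $Z_1$ and every summand $c\otimes d=E_{kj}\otimes E_{jk}$ of $y$, the second factors $b$ and $d$ commute, so $[a\otimes b,c\otimes d]=[a,c]\otimes bd$. The brackets $[E_{ij'},E_{kj}]=-\delta_{ji}E_{kj'}$ and $[E_{k',i+1},E_{kj}]=\delta_{k,i+1}E_{k'j}$ then show that each of the two parts of $Z_1$ contributes $-\sum E_{kj}\otimes E_{j,i+1}E_{ik}$ to $[Z_1,y]$. This uses that $E_{ik}$ and $E_{j,i+1}$ commute. With the correct sign, $Z_1-\tfrac12[y,Z_1]$ is exactly the bracket in the statement.
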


\begin{proof}    
    First, note that $\exp(y) \in \prod\limits_{\gamma \in Q_+}Y_{-\gamma} \otimes Y_{\gamma} \subset Y\otimes_z Y$.
    
    For the first relation, note that $\co 1 \otimes x_{i,0}^+,y \cf = 0$. Hence, the formula follows from the derivation property of the Lie bracket.

    Denote $x = x_{i,0}^+ \otimes 1$. For the second relation, we have:
    \[ \exp(-y)x\exp(y) = \exp(\ad_{-y})(x) = \sum\limits_{k=0}^\infty \frac{1}{k!} \ad_{-y}^k(x)\]
    where $\ad_{-y}(x) = \co x,y \cf$. Now, let us compute the first terms of this sum:
    \begin{gather*}
         \co x,y \cf = \sum\limits_{j= 1}^i E_{i,j}\otimes E_{j,i+1} - \sum\limits_{k = i+1}^{n+1}E_{k,i+1}\otimes E_{i,k}; \\
        \co \co x,y \cf,y \cf = - 2\sum\limits_{1 \leqslant j \leqslant i < k \leqslant n+1} E_{k,j} \otimes E_{j,i+1}E_{i,k}; \\
        \co \co \co x,y \cf,y \cf,y \cf = 0.
    \end{gather*}
    Then, for $k > 2$, we have $\ad_{-y}^k(x) = 0$. One can deduce
    \begin{align*} \co x, \exp(y) \cf &= \exp(y)\po \exp(-y)x\exp(y) - x \pf = \exp(y) \po \ad^0_{-y}(x) + \ad_{-y}(x) + \frac{1}{2}\ad_{-y}^2(x) - x \pf \\&= \exp(y) \po  \co x,y \cf + \frac{1}{2}\co \co x,y \cf, y\cf\pf 
    \end{align*}
    which is exactly the second formula.

    For the third, we use the defining relations of the J-presentation and deduce the relations for the current presentation using the isomorphism $\varphi$ introduced in theorem \ref{isopresentations}. Indeed, on the one hand, for $1 \leq j \leq i$ and $i < k \leq n+1$:
    \[ \co J(x_{i,0}^+),E_{j,k} \cf = J\po \co x_{i,0}^+,E_{j,k} \cf \pf = 0.\]
    On the other hand, by theorem \ref{isopresentations} we have:
    \[ \varphi(J(x_{i,0}^+)) = x_{i,1}^+ + \frac{1}{4} \po \sum\limits_{k = i+1}^{n+1} \po E_{i,k}E_{k,i+1} + E_{k,i+1}E_{i,k} \pf - \sum\limits_{j=1}^{i} \po  E_{j,i+1} E_{i,j} + E_{i,j}E_{j,i+1}\pf \pf. \]
    Then, for $1 \leqslant j \leqslant i < k \leqslant n +1$:
    \[ 0 = \co \varphi(J(x_{i,0}^+)),E_{j,k} \cf = \co x_{i,1}^+,E_{j,k}  \cf - E_{i,k}E_{j,i+1}. \]

    Finally, for the fourth relation, note that
    \begin{gather*} \co 1 \otimes x_{i,1}^+, y \cf = \sum\limits_{1 \leq j \leq i < k\leq n+1} E_{k,j} \otimes {E_{i,k}E_{j,i+1}}; \\  
    \co \co 1\otimes x_{i,1}^+,y \cf, y   \cf = 0.
    \end{gather*}
    Then, using again the formula used for the second relation, we have:
    \[ \co 1 \otimes x_{i,1}^+, \exp(y) \cf = \exp(y) \co 1\otimes x_{i,1}^+,y \cf\]
    which is the fourth formula.
\end{proof}

We can now prove the theorem.

\begin{proof}[Proof of the theorem \ref{coproduitSserie}]
    Still denote $y = \sum\limits_{1 \leq j \leq i < k \leq n+1} E_{k,j} \otimes E_{j,k}$. Let us check that $\Theta_i(z) = \exp(y)$ satisfies the $n$ relations of proposition \ref{intertwiningyangian}. The last relation is a direct consequence of the lemma \ref{commutators}. Let us check the relations for $j \neq i$.

    Let $j < i$. Then, note that
    \[ \co x_{j,0}^+ \otimes 1,y  \cf =  - \sum\limits_{k = i+1}^{n+1} E_{k,j+1} \otimes E_{j,k}; \quad \co 1 \otimes x_{j,0}^+, y \cf = \sum\limits_{k= i+1}^{n+1} E_{k,j+1}\otimes E_{j,k} \]
    so $\co x_{j,0}^+ \otimes 1 + 1 \otimes x_{j,0}^+, y \cf = 0$ and by the derivation property of the Lie bracket, we have $$ \co x_{j,0}^+ \otimes 1 + 1 \otimes x_{j,0}^+, \Theta_i(z) \cf = 0.$$

    Similarly, for $i < k$:
    \[ \co x_{k,0}^+ \otimes 1, y \cf = \sum\limits_{j=1}^i E_{k,j}\otimes E_{j,k+1}; \quad \co 1 \otimes x_{k,0}^+, y \cf = - \sum\limits_{j= 1}^i E_{k,j}\otimes E_{j,k+1}\]
    hence, as above, $\co x_{k,0}^+ \otimes 1  + 1 \otimes x_{k,0}^+, \Theta_i(z) \cf = 0$. This complete the proof.
\end{proof}

\section{Quantum affine algebras, T-series and Theta series}
We now aim to establish analog formulas for the quantum affine algebra $U_q(\hat \ssl_3)$. The result is similar, but requires different kinds of arguments. The T-series, introduced in \cite{Frenkandez}, have the same kind of factorization formula in terms of Theta series. Here, we will take advantage of the universal R-matrix and positive positive prefundamental modules of the Borel subalgebra. The computations are more ad hoc, which is reflected in the restriction of the result to the particular type $A_2$. However, we expect the formula to be generalizable to type $A_n$ and as in the Yangian situation.

\subsection{Quantum affine algebras}
As for Yangians, we recall two presentations, the Drinfeld-Jimbo and the Drinfeld new realization, and the correspondence between them.

\begin{definition}\cite{Drinfeld85}
    The \textbf{quantum affine algebra} $U_q(\hat\g)$ associated with $\g$ is the associative algebra defined by the generators $E_i,F_i,K_i,K^{-1}_i$ for $i \in \hat{I}$ satisfying the following relations for $i,j \in \hat{I}$:
    \begin{gather*}
        K_iK_i^{-1} = K_i^{-1}K_i = 1; \quad  K_iK_j = K_jK_i \quad K_0K_1\dots K_n = 1  \\
        K_iE_j = q^{c_{ij}}E_jK_i; \quad K_iF_j = q^{-c_{ij}}F_jK_i; \quad \co E_i,F_j \cf = \delta_{ij}\frac{K_i - K_i^{-1}}{q-q^{-1}}; \\
        \sum\limits_{s=0}^{-c_{ij}} \left[ \begin{array}{c} 1 - c_{ij}  \\ s \end{array} \right]_q E_i^{1-c_{ij}-s}E_j E_i^s = \sum\limits_{s=0}^{-c_{ij}} \left[ \begin{array}{c} 1 - c_{ij}  \\ s \end{array} \right]_q F_i^{1-c_{ij}-s}F_j F_i^s = 0 \text{ for } i\neq j
    \end{gather*}
    where the last relations are called the quantum Serre's relations.
\end{definition}

The quantum affine algebra has a Hopf algebra structure, whose coproduct is given by:
\begin{gather*}
    \Delta(E_i) = E_i\otimes 1 + K_i \otimes E_i; \quad \Delta(F_i) = 1 \otimes F_i + F_i \otimes K_i^{-1}; \quad \Delta(K_i) = K_i\otimes K_i.
\end{gather*}

We introduce an automorphism $\Phi$ and an anti-automorphism $\Omega$ of $U_q(\hat{\g})$ defined by, for $i \in \hat{I}$:
\begin{gather*}
    \Phi(E_i) = F_i; \quad \Phi(F_i) = E_i; \quad \Phi(K_i) = K_i; \quad \Phi(q) = q^{-1};\\
    \Omega(E_i) = F_i; \quad \Omega(F_i) = E_i; \quad \Omega(K_i) = K_i^{-1}; \quad \Omega(q) = q^{-1}.
\end{gather*}

We will need to study some representations that are not representations of the whole quantum affine algebra but of the Borel subalgebra.

\begin{definition}\cite{NewDrinfeld}
    The \textbf{upper Borel subalgebra}, denoted $U_q(\mathfrak{b})$, is the Hopf subalgebra of $U_q(\hat\g)$ generated by $E_i,K_i$ for $i \in \hat{I}$. The \textbf{lower Borel subalgebra}, denoted $U_q(\mathfrak c)$, is the Hopf subalgebra of $U_q(\hat\g)$ generated by $F_i,K_i$ for $i \in \hat{I}$.
\end{definition}

Yet, it will be more convenient to work with Drinfeld's new realization of the quantum affine algebra.

\begin{definition}\cite{NewDrinfeld}
The Drinfeld new realization of \textbf{quantum affine algebra} $U_q(\hat\g)$ associated with $\g$ is the associative algebra defined by the generators $x_{i,m}^\pm,\phi_{i,m}^\pm$ for $i \in I, m\in \Z$ satisfying the following relations for $m,p \in \Z$, $i,j \in I$ and $\epsilon \in \{ +,- \}$:
\begin{gather*}
    \phi_{i,m}^+ = 0 \text{ for } m  < 0; \quad \phi_{i,m}^- = 0 \text{ for } m > 0; \quad \phi_{i,0}^+\phi_{i,0}^- = 1; \\
    \co \phi_{i,m}^\pm, \phi_{i,p}^{\epsilon} \cf = 0 ; \co x_{i,m}^+,x_{i,p}^- \cf = \delta_{i,j} \frac{\phi_{i,m+p}^+ - \phi_{i,m+p}^-}{q-q^{-1}};  \\
    \phi^\epsilon_{i,m+1} x^\pm_{j,p} - q^{\pm c_{ij}}\phi^\epsilon_{i,m}x^\pm_{j,p+1} = q^{\pm c_{ij}}x_{j,p}^\pm\phi^\epsilon_{i,m+1} - x_{j,p+1}^\pm\phi_{i,m}^\epsilon; \\
    x^\pm_{i,m+1} x^\pm_{j,p} - q^{\pm c_{ij}}x^\pm_{i,m}x^\pm_{j,p+1} = q^{\pm c_{ij}}x_{j,p}^\pm x^\pm_{i,m+1} - x_{j,p+1}^\pm x_{i,m}^\pm; \\
    \sum\limits_{k = 0}^{1-c_{ij}}(-1)^k \left[ \begin{array}{c} 1 - c_{ij}  \\ k \end{array} \right]_q (x_{i,0}^\pm)^{1 - c_{ij} - k} x_{j,0}^\pm (x_{i,0}^\pm)^k = 0 \text{ if } i\neq j. 
\end{gather*}
    We define the generating series $\phi_i^\pm(z) := \sum\limits_{k\in \Z} \phi^\pm_{i,k}z^k$.
\end{definition}

The isomorphism between these two presentations has been given by Beck \cite{Beck}. Let us give formulas for $U_q(\hat\ssl_3)$: 
    \begin{gather*} K_i^{\pm 1} = \phi_{i,0}^\pm; \quad E_i = x_{i,0}^+; \quad F_i = x_{i,0}^- \text{ for } i \in \{1,2\}
    \\
       E_0 = K_0\co x_{1,1}^-,x_{2,0}^- \cf_q = -K_0\co x_{2,1}^-,x_{1,0}^- \cf_q  ; \quad
    F_0 = \Omega(E_0)= \co x_{2,0}^+,x_{1,-1}^+ \cf_{q^{-1}} K_0^{-1}\end{gather*}
    where $[x,y]_q = xy - qyx$.
    We will see more examples in the next section with the root vectors.

\begin{remark}
    The upper Borel subalgebra $U_q(\mathfrak b)$ contains the generators $x_{i,m}^+, x_{i,r}^-, \phi_{i,m}^+$ for $i \in I, m \geq 0, r > 0$. The lower Borel subalgebra $U_q(\mathfrak c)$ contains the generators $x_{i,-m}^-, x_{i,-r}^+, \phi_{i,-m}^-$ for $i \in I, m \geq 0, r >0$.
\end{remark}

Define the \textbf{Drinfeld-Cartan subalgebra} $U_q^0$ as the subalgebra of $U_q(\hat\g)$ generated by $\phi^\pm_{i,m}$ for $i \in I, m\in \mathbb Z$. Define the Drinfeld-Cartan elements $(h_{i,s})_{s \in \mathbb Z \setminus\{0\}, i \in I}$ by:
\[ \phi_i^\pm(z) = \phi_{i,0}^\pm \exp \po \pm(q-q^{-1}) \sum\limits_{\pm s > 0} h_{i,s}z^s \pf.\]
Note that $\phi_{i,\pm 1}^\pm = \pm(q-q^{-1}) \phi_{i,0}^\pm h_{i,\pm 1}$ and for $s \in \N$, $h_{i,\pm s}$ can be expressed in terms of $\phi^\pm_{i,\pm s}, \phi^\pm_{i,0},h_{i,\pm k}$ for $k < s$.\\
The Drinfeld-Cartan elements commute, and one can check:
\[  x^\pm_{j,m+s} = \frac{s}{\co c_{ij}s \cf_q} \co h_{i,s}, x^\pm_{j,m} \cf; \quad .\]

%The coproducts of Drinfeld-Cartan elements have been studied in \cite{Finkeliuk}. For example, for $\ssl_3$:
%\small{\begin{gather*}
 %   \Delta(h_{1,0}) = \square(h_{1,0}); \quad \Delta(h_{1,0}) = \square(h_{1,0}); \\ 
  %  \Delta(h_{1,-1}) = \square(h_{1,-1}) +  (q-q^{-1})\po \po q + q^{-1} \pf x_{1,0}^- \otimes  x_{1,-1}^+ - x_{2,0}^- \otimes  x_{2,-1}^+ - q^2 \co x_{2,0}^-, x_{1,0}^-  \cf_{q^{-3}} \otimes \co x_{2,0}^+,x_{1,-1}^+ \cf_{q^{-1}} \pf;  \\
%    \Delta(h_{2,-1}) = \square(h_{2,-1}) +  (q-q^{-1}) \po \po q + q^{-1} \pf x_{2,0}^- \otimes  x_{2,-1}^+ - x_{1,0}^- \otimes  x_{1,-1}^+ + q^2 \co x_{1,0}^-, x_{2,0}^-  \cf_{q^{-3}} \otimes \co x_{2,0}^+,x_{1,-1}^+ \cf_{q^{-1}} \pf.
%\end{gather*}}
%where $\square(x) = 1 \otimes x + x \otimes 1$.

\normalsize{}

\subsection{T-series and Theta series}
The T-series were first introduced by Frenkel-Hernandez \cite{Frenkandez} in the study of Baxter’s Q-operators for quantum integrable systems. They are defined as the limit of transfer matrices (which are traces of monodromy matrices) of a positive prefundamental module over the upper Borel subalgebra. In \cite{Zhang}, Zhang proved that the coproduct of T-series can be factorized as in the Yangian situation in terms of Theta series. Furthermore, the Theta series can be computed by the universal R-matrix.

Recall $\tilde{C}(q) = \po \tilde C_{ij}(q) \pf_{i,j\in I}$ the inverse of the quantum Cartan matrix and $[s]_q = \frac{q^s-q^{-s}}{q-q^{-1}}$.

\begin{definition}\cite[proposition 5.5]{Frenkandez}
    The T-series $T_i(z)$ are invertible power series in $z$ with coefficients in the Drinfeld-Cartan subalgebra $U_q^0$ defined by:
    \[ T_i(z) := \exp\po \sum\limits_{s > 0} \sum\limits_{j \in I} \frac{\tilde{C}_{ij}(q^s) h_{j,-s}}{\co  s\cf_q} z^s\pf.\]
\end{definition}

The T-series have the same kind of commutation relations as the S-series, given in the introduction. As for the Yangian, we can define the \textbf{Theta series} by \cite[definition 9.2]{Zhang}:
\[ \Delta(T_i(z)) = \po 1 \otimes T_i(z) \pf \times \Theta_i(z) \times \po T_i(z) \otimes 1 \pf.\]
For $\beta \in Q_+$, denote by $\Theta_{i,\beta}(z) \in \po U_q(\mathfrak c)\otimes U_q(\mathfrak c) \pf[[z]]$ the $(-\beta,\beta)$-component of $\Theta_i(z)$. One can check that we have $\Theta_{i,0}(z) = 1 \otimes 1$ and $\Theta_i(z) = \sum\limits_{\beta \in Q_+} \Theta_{i,\beta}(z)$.

Fix $i \in I$. To compute $\Theta_i(z)$, we follow section 9 of \cite{Zhang}. The weight components of $\Theta_i(z)$ can be expressed in terms of monodromy matrices. These are obtained by applying the universal R-matrix on a $U_q(\mathfrak{b})$-module, called prefundamental. In the next section, we explicitly describe the construction of a  basis of subalgebras of $U_q(\mathfrak b)$ formed of root vectors to express the R-matrix, following \cite{Damiani}.

\section{Root vectors and universal R-matrix}
%From now on, consider $\g$ of type $A_2$. 
We recall the construction of the root vectors of the Borel subalgebra $U_q(\mathfrak{b})$ from \cite{Damiani}. This will be used later to study the Theta series. Using the action of the braid group described in \cite{Beck}, we compute the real positive roots and the related root vectors. Then, we express the universal R-matrix in this basis. 

Consider the set of \textbf{real positive roots} $\hat R$ of the affine Lie algebra $\hat \g$. It is the disjoint union of the two sets 
\[ \hat R_+ :=\{ m\delta + \alpha ~|~\alpha\in R;~m\geq 0\}; \quad \hat R_ - :=\{ r\delta - \alpha ~|~\alpha \in R; r >0 \}.\]
In this section, our goal is to construct the root vectors associated with these real positive roots. 

First, let us recall the basic properties of the braid group and its action on the quantum affine algebra. We will use it to construct the real positive roots and the associated root vectors.

Consider $W_0$ the subgroup of $\Aut(P^\vee)$ generated by the simple reflections $s_i$ for $i \in I$. The Weyl group is defined by $W = W_0 \rtimes P^\vee$, with the product $(s,x)(s',y) = (ss',s'^{-1}(x)+y)$. For $s \in W_0$, we write $s = (s,0)$ and for $x \in P^\vee$, we write $x = (1,x)$. Consider $\theta = \alpha_1 + \dots + \alpha_n$ the highest root of $\g$ and denote $s_0 = (s_{\theta},\theta^\vee)$. Consider the normal subgroup $\tilde{W}$ of $W$ generated by $(s_0,s_1,\dots,s_n)$ and the quotient $\mathcal{T} = W/\tilde{W}$. Then $W = \tilde{W} \rtimes \mathcal{T}$ and the action of $\mathcal{T}$ on $\tilde{W}$ is given by $\tau s_i \tau^{-1} = s_{\tau(i)}$ for $\tau \in \mathcal{T}$ and $i \in \hat{I}$. Finally, extend the length function of $\tilde{W}$ to $W$ by $l_W(\tau w) = l_{\tilde{W}}(w)$ for $\tau \in \mathcal{T}$ and $w \in \tilde{W}$.

Now, consider the braid group $\mathcal{B}$ generated by $\mathcal{T}_w$ for $w \in W$ with relations $\mathcal{T}_{ww'} = \mathcal T_w\mathcal T_{w'}$ if $l(ww') = l(w) + l(w')$. A reduced presentation of $w \in W$ is an expression $w = \tau s_{i_1}\dots s_{i_N}$ where $l(w) = n$ for $\tau \in \mathcal{T}$. For example, for $\g = \ssl_3$, in $W$:
\[ \omega_1^\vee = \tau_1 s_2s_1; \quad \omega_2^\vee = \tau_2 s_1s_2; \quad \mathcal T_{\omega_1^\vee} = \mathcal T_{s_0}\mathcal T_{s_2} \mathcal T_{\tau_1}; \quad \mathcal T_{\omega_2^\vee} = \mathcal T_{s_0} \mathcal T_{s_1} \mathcal T_{\tau_2}\]
where $\tau_1 = (012)$ and $\tau_2 = (021) = \tau_1^{-1}$.

Finally, recall the action of the braid group associated with $\tilde{W}$ on $U_q(\hat\g)$, denoting $\mathcal T_i := \mathcal T_{s_i}$ for $i \in I$.
\begin{gather*}
    \mathcal T_i(E_i) = -F_iK_i; \quad \mathcal T_i(E_j) = \sum\limits_{s=0}^{-c_{ij}}(-1)^{s-c_{ij}}q^{-s}E_i^{-c_{ij}-s}E_jE_i^s \quad \text{if } j\neq i;\\
    \mathcal T_i(F_i) = -K_i^{-1}E_i; \quad  \mathcal T_i(F_j)  = \sum\limits_{s=0}^{-c_{ij}}(-1)^{s-c_{ij}}q^{s}F_i^{s}F_jF_i^{-c_{ij} - s} \quad \text{if } j\neq i; \\
    \mathcal T_i(K_j) = K_{s_i(\alpha_j)}.
\end{gather*}
Moreover, this action is compatible with the automorphism $\Phi$ and the anti-automorphism $\Omega$ in the following sense:
\[ \mathcal T_i\Phi = \Phi^{-1} \mathcal T_i; \quad \mathcal T_i\Omega = \Omega \mathcal T_i.\]

Now, following \cite{Damiani}, we define $2\rho^\vee := \omega_2^\vee + \omega_1^\vee + \omega_2^\vee + \omega_1^\vee$. As an element of the Weyl group, one can then write:
\[ 2\rho^\vee = \omega_2^\vee \omega_1^\vee \omega_2^\vee \omega_1^\vee = s_0s_1s_2s_1s_0s_1s_2s_1. \]
One should keep in mind that the choice of the expression of $2\rho^\vee$ is important: the root vectors and the ordering of the real positive roots will depend on it, and so will the definition of the univ R-matrix.

We then define the sequence $(i_1,i_2,i_3,i_4,i_5,i_6,i_7,i_8) = (0,1,2,1,0,1,2,1)$ and the cyclic infinite sequence
\[ \begin{array}{ccccc}
    \iota & : & \Z & \to & \hat I  \\
     &  & k  &\mapsto & i_{k\text{ mod }8} 
\end{array} \]
to construct the real positive roots as in \cite[Proposition 3.2]{Damiani}. For $k \in \Z$, let
    \[ \beta_k := \left\{ \begin{array}{cc}
         s_{\iota(1)} \dots s_{\iota(k-1)} (\alpha_{\iota(k)}) & \text{if } k \geq 1   \\
         s_{\iota(0)} \dots s_{\iota(k+1)}(\alpha_{\iota(k)})& \text{if } k \leq 0
    \end{array} \right.\]
    Then $\{\beta_k ~|~k \leq 0\} = \hat R_+$ and $\{ \beta_{k} ~|~k\geq 1\}= \hat R_-$. For example, the first few terms for $\hat R_\pm$ are
\begin{gather*} \beta_1 = \delta - \alpha_1 - \alpha_2; \quad \beta_2 = \delta - \alpha_2; \quad \beta_3 = 2\delta-\alpha_1 - \alpha_2; \quad  \beta_4 = \delta-\alpha_1. \\
 \beta_0 = \alpha_1 ; \quad  \beta_{-1} = \alpha_1 + \alpha_2; \quad 
\beta_{-2} = \alpha_2.
\end{gather*}
We fix an ordering on $\hat{R}$ by
\[ \beta_1 \prec \beta_2 \prec \beta_3 \prec \beta_4 \dots \prec \beta_{-2} \prec \beta_{-1} \prec \beta_0. \]
We define the \textbf{root vectors} associated with the real positive roots $\beta_k$ from \cite[Definition 7]{Damiani} by
    \[ E_{\beta_k} := \left\{ \begin{array}{cc}
        \mathcal T_{\iota(1)} \dots \mathcal T_{\iota(k-1)}\po E_{\iota{(k)}} \pf & \text{if } k \geq 1  \\
        \mathcal T_{\iota(0)}^{-1} \dots \mathcal T_{\iota(k+1)}^{-1} \po  E_{\iota(k)}\pf & \text{if } k \leq 0 
    \end{array} \right.\]
    and $F_{\beta_k} = \Omega\po E_{\beta_k} \pf$. It is known that $E_{\beta_k} \in U_q(\mathfrak b)$ for $k \in \Z$.

    \begin{remark}
    The general formulas for the root vectors associated with $m\delta + \alpha_i$ and $r\delta -\alpha_i$ for $m \geq 0, r \geq 1, i\in I$ are given in \cite{Frenkandez}:
    \[ E_{m\delta +\alpha_i} = x_{i,m}^+; \quad F_{m\delta + \alpha_i} = x_{i,-m}^-;\\
    E_{r\delta-\alpha_i} = -K_i^{-1}x_{i,r}^-; \quad F_{r\delta-\alpha_i} = -x_{i,-r}^+K_i. \]
\end{remark}

    Let $U_q^+(\mathfrak b)$ be the subalgebra of $U_q(\mathfrak b)$ generated by the root vectors $E_{\beta_k}$ for $k \leq 0$. Then it has a basis 
    \[\{  E_{\beta_0}^{a_0} E_{\beta_{-1}}^{a_{-1}} E_{\beta_{-2}}^{a_{-2}} \dots ~ | ~  a_0,a_{-1},a_{-2} \dots \in \N \text{ such that almost all } a_k = 0  \}.\]
    Similarly, the other monomials in the roots vectors $E_{\beta_k}$ for $k > 0$ span another subalgebra $U_q^-(\mathfrak b)$ of $U_q(\mathfrak b)$. Let $U_q^0(\mathfrak b)$ be the subalgebra of $U_q(\mathfrak b)$ generated by $K_i^{\pm 1},h_{i,s}$ for $i \in I, s >0$. Then the multiplication defines a linear isomorphism $$U_q^+(\mathfrak b) \otimes U_q^0(\mathfrak b) \otimes U_q^-(\mathfrak b) \cong U_q(\mathfrak b). $$
     %The $(F_{\beta_k})$ define a similar PBW basis for $U_q(\mathfrak{c})$.

In the next sections, we will be interested in some particular root vectors computed as follows.

\begin{proposition}
    For $i \in I$, we have the following formulas in $U_q(\mathfrak b)$:
    \begin{gather*}
    E_{\alpha_i} = x_{i,0}^+; \quad F_{\alpha_i} = x_{i,0}^-;  \quad E_{\delta- \alpha_i} = -K_i^{-1}x^-_{i,1}; \quad F_{\delta-\alpha_i} = -x_{i,-1}^+K_i\\
    E_{\alpha_0} = E_0; \quad F_{\alpha_0} = F_0; \quad 
    E_{\alpha_1 + \alpha_2} = - \co x_{2,0}^+,x_{1,0}^+ \cf_{q^{-1}}; \quad F_{\alpha_1 + \alpha_2} =  -\co x_{1,0}^-,x_{2,0}^- \cf_q 
\end{gather*}
where $\co x,y \cf_q = xy - qyx$.
\end{proposition}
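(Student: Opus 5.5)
We compute each root vector directly from Damiani's definition $E_{\beta_k}$ with the sequence $\iota$, and obtain the $F$'s by applying $\Omega$. First we identify which index $k$ gives which root. From the listed values, $\beta_0=\alpha_1$, $\beta_{-1}=\alpha_1+\alpha_2$, $\beta_{-2}=\alpha_2$, $\beta_2=\delta-\alpha_2$, $\beta_4=\delta-\alpha_1$. For $\alpha_0$ we use $\beta_1=\delta-\alpha_1-\alpha_2=\alpha_0$.

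\textbf{Easy cases.} For $k=0$ the product of braid operators is empty, so $E_{\beta_0}=E_{\iota(0)}=E_{i_8}=E_1=x_{1,0}^+$. For $k=1$ the product is again empty, so $E_{\beta_1}=E_{\iota(1)}=E_0$. Then $F_{\alpha_0}=\Omega(E_0)=F_0$, using $\Omega(E_i)=F_i$. For $k=-2$ we have $E_{\beta_{-2}}=\mathcal T_{1}^{-1}\mathcal T_{2}^{-1}(E_1)$, since $\iota(0)=1$ and $\iota(-1)=2$. Using $\mathcal T_i^{-1}=\Omega\mathcal T_i\Omega$ (which follows from $\mathcal T_i\Omega=\Omega\mathcal T_i$ together with the explicit formulas), a standard rank-two computation shows $\mathcal T_1^{-1}\mathcal T_2^{-1}(E_1)=E_2$, which is $x_{2,0}^+$. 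For $k=-1$ we get $E_{\beta_{-1}}=\mathcal T_1^{-1}(E_2)$. Writing the inverse braid action explicitly (obtained from the formula for $\mathcal T_1(F_2)$ via $\Omega$, then $\Phi$), this equals $-(E_2E_1-q^{-1}E_1E_2)=-[x_{2,0}^+,x_{1,0}^+]_{q^{-1}}$. The only thing to watch here is the sign and $q$-power conventions. Applying the anti-automorphism $\Omega$, which reverses products and sends $q\mapsto q^{-1}$, gives
\[\Omega\big(-(E_2E_1-q^{-1}E_1E_2)\big)=-(F_1F_2-qF_2F_1)=-[x_{1,0}^-,x_{2,0}^-]_q.\]
In general $F_{\alpha_i}=\Omega(x_{i,0}^+)=x_{i,0}^-$.

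\textbf{The roots $\delta-\alpha_i$.} Here I would rely on the remark quoting \cite{Frenkandez}, namely $E_{r\delta-\alpha_i}=-K_i^{-1}x_{i,r}^-$, specialized to $r=1$. As a sanity check, one can verify directly that $E_{\beta_2}=\mathcal T_0(E_2)=E_0E_2-q^{-1}E_2E_0$. Substituting Beck's formula $E_0=K_0[x_{1,1}^-,x_{2,0}^-]_q$ and using the $\mathfrak{sl}_2$-type relation $[x_{2,0}^+,x_{2,0}^-]=(K_2-K_2^{-1})/(q-q^{-1})$, this should reduce to $-K_2^{-1}x_{2,1}^-$, since $K_0K_1K_2=1$. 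The analogous reduction for $\beta_4=\mathcal T_0\mathcal T_1\mathcal T_2(E_1)$ gives $-K_1^{-1}x_{1,1}^-$. For the $F$ versions, apply $\Omega$: it sends $K_i^{-1}\mapsto K_i$, and it sends $x_{i,1}^-\mapsto x_{i,-1}^+$ (from the remark, or by compatibility with Beck's isomorphism). Since $\Omega$ is an anti-automorphism, the order is reversed, giving $-x_{i,-1}^+K_i$.

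\textbf{Main obstacle.} The main difficulty is this last direct verification for $\delta-\alpha_i$. It requires commuting $x_{1,1}^-$ and $K_0$ past $E_2$ and applying the Drinfeld relations with the correct $q$-powers. I would cite \cite{Frenkandez,Beck} for these two formulas rather than redo them, and carry out explicitly only the short $\mathcal T_1^{\pm1}$ computations.
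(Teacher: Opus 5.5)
You treat $E_{\alpha_1},E_{\alpha_2},E_{\alpha_0},E_{\alpha_1+\alpha_2}$ and their $\Omega$-images the same way the paper does. One slip: $\mathcal T_i\Omega=\Omega\mathcal T_i$ gives $\Omega\mathcal T_i\Omega=\mathcal T_i$, not $\mathcal T_i^{-1}$. The identity actually needed is $\mathcal T_i^{-1}=\Phi\mathcal T_i\Phi$, e.g.\ $\mathcal T_1^{-1}(E_2)=\Phi\mathcal T_1(F_2)$.

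The gap is the $\delta-\alpha_i$ part, which is the only non-formal content of the proposition. Quoting the remark $E_{r\delta-\alpha_i}=-K_i^{-1}x_{i,r}^-$ amounts to quoting the statement itself. It also holds only in particular conventions, and those are exactly what must be checked here: the reduced word chosen for $2\rho^\vee$, on which the paper stresses the root vectors depend, and Beck's sign normalisation of the $x_{i,r}^\pm$. The same objection applies to simply asserting $\Omega(x_{i,1}^-)=x_{i,-1}^+$.

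Your sanity check is wrong on three counts:
\begin{enumerate}
\item $E_{\beta_2}=\mathcal T_{\iota(1)}(E_{\iota(2)})=\mathcal T_0(E_1)$, not $\mathcal T_0(E_2)$.
\item The braid formula gives $\mathcal T_0(E_2)=-(E_0E_2-q^{-1}E_2E_0)$, with the opposite sign to yours.
\item $\mathcal T_0(E_2)$ has weight $\alpha_0+\alpha_2=\delta-\alpha_1$, so it can only reduce to a multiple of $x_{1,1}^-$, never to $x_{2,1}^-$.
\end{enumerate}
Indeed $\mathcal T_1\mathcal T_2(E_1)=E_2$, so $\mathcal T_0(E_2)=E_{\beta_4}$, and your assignment would force $E_{\beta_2}=E_{\beta_4}$.

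The idea you are missing, which the paper uses, is to express $x_{1,1}^-$ itself through the braid action via Beck's definition. That gives $x_{1,1}^-=\mathcal T_{\omega_1^\vee}(F_1)=\mathcal T_0\mathcal T_2(F_2)=\mathcal T_0(-K_2^{-1}E_2)=K_1[E_0,E_2]_{q^{-1}}$. Compare this with $E_{\delta-\alpha_1}=\mathcal T_0\mathcal T_1\mathcal T_2(E_1)=\mathcal T_0(E_2)=-[E_0,E_2]_{q^{-1}}$. Likewise $x_{1,-1}^+=\mathcal T_{\omega_1^\vee}(E_1)=[F_2,F_0]_qK_1^{-1}=\Omega(x_{1,1}^-)$, which yields $F_{\delta-\alpha_1}=-x_{1,-1}^+K_1$.

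Your substitution idea also works once it is aimed at the right element. Insert $E_0=K_0[x_{1,1}^-,x_{2,0}^-]_q$ and use
\begin{itemize}
\item $[x_{2,0}^+,x_{1,1}^-]=0$,
\item $[x_{2,0}^+,x_{2,0}^-]=(K_2-K_2^{-1})/(q-q^{-1})$,
\item $K_2x_{1,1}^-=q\,x_{1,1}^-K_2$ and $K_0E_2=q^{-1}E_2K_0$.
\end{itemize}
In two lines this gives $[E_0,E_2]_{q^{-1}}=K_0K_2x_{1,1}^-=K_1^{-1}x_{1,1}^-$, so what you called the main obstacle is not a real one.

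Doing the computation honestly also shows why the uniform formula cannot just be cited. For $\beta_2$, the paper's normalisation $E_0=-K_0[x_{2,1}^-,x_{1,0}^-]_q$ gives $E_0E_1-q^{-1}E_1E_0=-K_2^{-1}x_{2,1}^-$; this identity is used in the proof of Proposition~\ref{moduleprefondamental}. Hence $E_{\delta-\alpha_2}=\mathcal T_0(E_1)=+K_2^{-1}x_{2,1}^-$. So the sign for $i=2$ depends on Beck's sign conventions. For the $\delta-\alpha_i$ formulas the paper's proof only treats $i=1$, which is also the only case whose sign matters later.
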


\begin{proof}
    The first two root vectors are $
    E_{\alpha_1} = E_1 ; \quad E_{\alpha_0} = E_0.$

We can also easily compute:
\[ E_{\alpha_1 + \alpha_2} = \mathcal T_1^{-1}(E_2) = \Phi \mathcal T_1(F_2) = \Phi(-F_2F_1 + qF_1F_2) = q^{-1}E_1E_2 - E_2E_1 = - \co x_{2,0}^+,x_{1,0}^+ \cf_{q^{-1}}.\]

By definition, there is no reason for $E_{\alpha_2} = E_2$. Let us check this properly.
   \begin{align*} 
    E_{\alpha_2} &= \mathcal T_1^{-1}\mathcal T_2^{-1}(E_1) = \Phi \mathcal T_1 \mathcal T_2(F_1) = \Phi \mathcal T_1\po -F_1F_2 + qF_2F_1 \pf \\ 
    &= \Phi \po K_1^{-1}E_1(-F_2F_1 + q F_1F_2) - q(-F_2F_1 + qF_1F_2)K_1^{-1}E_1 \pf
    \end{align*}
Since 
\begin{align*} K_1^{-1}E_1(-F_2F_1 + q F_1F_2) =& -qF_2F_1K_1^{-1}E_1 - q^{-1}F_2 \po \frac{1-K_1^{-2}}{q-q^{-1}} \pf + q^2F_1F_2K_1^{-1}E_1 \\ &+\frac{q}{q-q^{-1}} F_2 - q^{-1} F_2\frac{K_1^{-2}}{q-q^{-1}} 
\end{align*}
we finally have:
\[ E_{\alpha_2} = \Phi(F_2) = E_2 = x_{2,0}^+.\]

Now, we want to compute $E_{\delta - \alpha_1}$. We will do the same kind of computation as for $E_{\alpha_2}$, but we want an expression in Drinfeld's new realization. According to \cite[Definition 4.6]{Beck}:
\[ x_{1,1}^- = \mathcal T_{\omega_1^\vee}(F_1) = \mathcal T_0 \mathcal T_2(F_2) = \mathcal T_0(-K_2^{-1}E_2) = K_0^{-1}K_2^{-1} \po E_0E_2 - q^{-1}E_2E_0 \pf = K_1 \co E_0,E_2 \cf_{q^{-1}}.\]

On the other hand, one can check:
\[ E_{\delta-\alpha_1} = \mathcal T_0 \mathcal T_1 \mathcal T_2(E_1) = \mathcal T_0(E_2) = - \co E_0,E_2 \cf_{q^{-1}} = -K_1^{-1}x_{1,1}^-.\]

To compute the root vectors associated with the lower Borel subalgebra, we apply $\Omega$ to these results:
\begin{gather*} F_{\alpha_1} = F_1; \quad F_{\alpha_0} = F_0; \\
F_{\alpha_1 + \alpha_2} = qF_2F_1 - F_1F_2 = -\co x_{1,0}^-,x_{2,0}^- \cf_{q^{-1}} \\
F_{\alpha_2} = F_2.
\end{gather*}

For $F_{\delta-\alpha_1}$, we need to compute:
\[ \Omega(x_{1,1}^-) = \Omega\po K_1 \po -E_0E_2 + q^{-1}E_2E_0 \pf\pf = F_2F_0K_1^{-1} - qF_0F_2K_1^{-1}.\]

Since, by definition 4.6 in \cite{Beck}, 
\[ x_{1,-1}^+ = \mathcal T_{\omega_1^\vee}(E_1) = \mathcal T_0 \mathcal T_2(E_2) = \mathcal T_0(-F_2K_2) = \co F_2,F_0 \cf_{q} K_1^{-1}\]
hence
\[ \Omega(x_{1,1}^-) = x_{1,-1}^+.\]
Therefore:
\[ F_{\delta-\alpha_1} = \Omega(-K_1^{-1}x_{1,1}^-) = -x_{1,-1}^+K_1.\]
\end{proof}

To conclude this section, let us define the triangular parts of the universal R-matrix. We recall the definition of the q-exponential for $x \in U_q(\hat\g)$:
\[ \exp_q(x) = \sum\limits_{k \geq 0} \frac{x^k}{(k)_q!}. \]

Consider the following completion of $U_q(\mathfrak b) \otimes U_q(\mathfrak c)$.  $$U_q(\mathfrak b) \tilde\otimes U_q(\mathfrak c) := \sum\limits_{\alpha,\beta \in Q}\po \prod\limits_{\gamma \in Q_+} U_q(\mathfrak b)_{\alpha + \gamma} \otimes U_q(\mathfrak c)_{\beta-\gamma} \pf\subset \prod\limits_{\alpha,\beta \in Q} U_q(\mathfrak{b})_\alpha \otimes U_q(\mathfrak{c})_\beta.$$ It is an algebra.

\begin{definition}\cite[(11.2)]{Damiani}\cite[(8.3)]{Zhang}
    The \textbf{triangular parts} of the universal R-matrix are the power series in $z$ with coefficients in  $U_q(\mathfrak{b}) \tilde\otimes U_q(\mathfrak{c})$:
    \begin{align*} \mathcal{R}^+(z) = &\prod\limits_{m \geqslant 0, ~ \alpha \in R} \exp_q\po (q^{-1}-q) E_{m\delta + \alpha} \otimes F_{m\delta + \alpha} z^m \pf;
    \end{align*}
    \begin{align*}
        \mathcal{R}^-(z) = &\prod\limits_{r > 0; ~ \alpha \in R} \exp_q\po (q^{-1}-q) E_{r\delta - \alpha} \otimes F_{r\delta - \alpha} z^r \pf
    \end{align*}
    where the factors are ordered in a decreasing manner according to the ordering fixed above.
\end{definition}

Now, let us define the $U_q(\mathfrak{b})$-module on which we will apply the first factor of these R-matrices.

\section{Positive prefundamental module}
We recall the results on lowest $\ell$-weight modules over $U_q(\mathfrak b)$, based on \cite{HernandezJimbo}. We are interested in the family of positive prefundamental modules, which are representations of $U_q(\mathfrak b)$ that are not representations of the whole quantum affine algebra $U_q(\hat \ssl_3)$. Here, we determine the actions of all the root vectors on a positive prefundamental representation.

Let us first recall the weight structure of the representations of $U_q(\mathfrak{b})$. For $V$ a $U_q(\mathfrak{b})$-module, for $\lambda \in (\C^\times)^I$, consider:
\[ V_\lambda := \{ v \in V~ | ~ \phi_{i,0}^+ \cdot v = \lambda(i)v \}. \]
If $V_\lambda \neq 0$, it is called the weight space of weight $\lambda$. A module $V$ is called weight graded if it is a direct sum of its weight spaces. Moreover, observe that the root lattice $Q$ can be embedded into $\mathfrak t^\times$ by $\alpha_j \mapsto \overline{\alpha_j} = \po q^{c_{ij}} \pf_{i,j \in I}$. This application is injective, as $q$ is not a root of unity. Denote by $\overline Q$ the image of $Q$. If all the weights of a module belong to $ \overline Q$, then the module is called \textbf{root graded}. For $v \in V_\lambda$, denote $\wt(v) := \lambda$.

The set of $\ell$-weight $\mathfrak{t}_l := \po \C[[z]]^\times \pf^I$ is the set of $I$-tuples of invertible power series in $z$. For $j \in I$ and $a \in \C$, we define the \textbf{positive prefundamental} $\ell$-weight $\Psi_{j,a} := \po (1-az)^{\delta_{ij}} \pf_{i\in I}$.

For $\mathbf{P} = (P_i(z))_{i\in I} \in \mathfrak t_\ell^\times$, the lowest $\ell$-weight Verma module $M(\mathbf{P})$ is the $U_q(\mathfrak{b}$)-module generated by a vector $v_0$ such that 
\[ \phi_i^+(z) \cdot v_0 = P_i(z)\ v_0; \quad E_{\alpha} \cdot v_0 = 0 \text{ for } \alpha \in \hat R_-. \]
By the triangular decomposition, we have $$M(\mathbf{P}) = U_q^+(\mathfrak b) \cdot v_0.$$
We denote by $L\po \mathbf{P} \pf$ its irreducible quotient. If the $\ell$-weight is positive prefundamental, we call this irreducible module \textbf{positive prefundamental module}.

In order to compute the monodromy matrices, we are interested in the particular case of positive prefundamental modules. For $\ssl_3$, we will only consider the $\ell$-weight $\Psi_{1,1} = \po 1-z,1 \pf$, since we will deduce the coproduct formula for $T_2$ from that for $T_1$. Denote by $L_1$ the positive prefundamental module associated with $\Psi_{1,1}$. Then, it is generated by a vector $v_{0,0}$ such that:
    \begin{gather*} \phi_1^\pm(z) \cdot v_{0,0} = (1-z)v_{0,0}; \quad \phi^\pm_2(z) \cdot v_{0,0} = v_{0,0}\\
    \quad E_{r\delta - \alpha_1} \cdot v_{0,0} = 0 \quad E_{r\delta - \alpha_2} \cdot v_{0,0} = 0 \quad E_{r\delta - \alpha_1- \alpha_2} \cdot v_{0,0} = 0 \text{ for } r\geq 1. 
    \end{gather*}  

However, Frankel-Hernandez \cite{Frenkandez} proved that most root vectors have a trivial action on this module. The actions of the remaining vectors are explicitly computable.

\begin{lemme}\cite{HernandezJimbo}\cite[Theorem 6.3]{Frenkandez}
    \begin{enumerate}
        \item The following generators act by $0$ on $L_1$, for $j \in I, r \geqslant 1$:
    \[x_{j,r}^+; \quad E_{(r+1)\delta - \alpha_1}; \quad E_{r\delta - \alpha_2}; \quad E_{(r+1)\delta - (\alpha_1+\alpha_2)}; \quad \phi_{j,\delta_{i,j}+r}^\pm.\]
        \item We have the weight decomposition:
        \[ L_1 = \bigoplus\limits_{a,b\in\N}\po L_1\pf_{\overline{a\alpha_1}} \oplus \po L_1 \pf_{\overline{b(\alpha_1 +\alpha_2)}}\]
        where each weight space is of dimension $1$.
        \item From the previous point, we have $E_2\cdot v_{0,0} = 0$.
    \end{enumerate}    
\end{lemme}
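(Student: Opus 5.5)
Point (1) is the statement of \cite[Theorem 6.3]{Frenkandez} (resting on \cite{HernandezJimbo}) specialized to $\ssl_3$ and $\Psi_{1,1}$; I would recall it rather than reprove it. I would still sketch why it holds, since the same mechanism gives (2). The module $L_1$ is the limit of Kirillov--Reshetikhin modules $W^{(1)}_{k,a_k}$, suitably normalized, as $k\to\infty$. On these modules, after twisting by $K$'s, the relevant operators act through the limiting $\ell$-weights. For $\phi^+_{j,m}$ the vanishing for $m>\delta_{1,j}$ is read off directly: every $\ell$-weight of $L_1$ is $\Psi_{1,1}$ times a constant in $(\C^\times)^I$. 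This uses the Frenkel--Hernandez description of the $q$-character of $L_1$ as $[\Psi_{1,1}]\chi(L_1)$.

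Since $\phi^+_j(z)$ acts on $L_1$ by polynomials of degree $\le\delta_{1,j}$, the relation
\[
x^+_{j,m+s}=\frac{s}{[c_{ij}s]_q}\,[h_{i,s},x^+_{j,m}]
\]
gives $x^+_{j,r}$ in terms of $[h_{i,r},x^+_{j,0}]$. The $h_{i,r}$ act on $L_1$ as scalars: the $\ell$-weights are constant multiples of $\Psi_{1,1}$, so $\log$ of them is the same up to a constant. Hence $x^+_{j,r}$, $r\ge1$, acts by commutators of scalars with operators, which is $0$.

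For the root vectors $E_{(r+1)\delta-\alpha_1}$, $E_{r\delta-\alpha_2}$ and $E_{(r+1)\delta-\alpha_1-\alpha_2}$, I would use the remark $E_{r\delta-\alpha_i}=-K_i^{-1}x^-_{i,r}$. Then $x^-_{i,r+1}$ is proportional to $[h_{i,1},x^-_{i,r}]$ modulo the same kind of argument, so the higher ones vanish. The same holds for $x^-_{2,r}$ with $r\ge1$, since the second component of the $\ell$-weight is $1$. For the $\alpha_1+\alpha_2$ family, I would write $E_{(r+1)\delta-\alpha_1-\alpha_2}$ as a $q$-commutator of $E_{\delta-\alpha_1}$-type and $E_{r\delta-\alpha_2}$-type vectors via Damiani's recursion, and conclude from the previous vanishing.

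For (2), the character of $L_1$ is the limit of normalized characters of $W^{(1)}_k$, which are the $\ssl_3$-modules $V(k\omega_1)=S^k\C^3$. After normalization by the lowest weight, the weights are $a\alpha_1+b(\alpha_1+\alpha_2)$ with $a,b\in\N$, each with multiplicity one. So $L_1=\bigoplus_{a,b}(L_1)_{\overline{a\alpha_1+b(\alpha_1+\alpha_2)}}$, one-dimensional. I read the displayed decomposition as indexed by pairs $(a,b)$. Point (3) is then immediate: $E_2v_{0,0}$ would have weight $\overline{\alpha_2}$, which is not of the form $a\alpha_1+b(\alpha_1+\alpha_2)$ with $a,b\in\N$, so it vanishes.

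The main obstacle is the vanishing of the $\alpha_1+\alpha_2$ family of root vectors. It requires matching Damiani's convex-order root vectors with Drinfeld generators, and that conversion is exactly what the paper's root-vector computations address. I would first verify the $r=1$ case by hand using the proposition above, then induct using the braid action $\mathcal T_{\omega_1^\vee}$.
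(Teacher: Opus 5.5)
Your overall route is the paper's: (1) and (2) are quoted from Hernandez--Jimbo and Frenkel--Hernandez, and (3) is the grading argument. The gap is in the justification you add for (2), on which your (3) rests.

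You identify the character of $L_1$ with the limit of the characters of $W^{(1)}_k$, i.e.\ of $V(k\omega_1)=S^k\C^3$, normalized by the lowest weight, and claim this gives the weights $a\alpha_1+b(\alpha_1+\alpha_2)$. It does not. Let $\epsilon_1,\epsilon_2,\epsilon_3$ be the weights of $\C^3$, so $\alpha_1=\epsilon_1-\epsilon_2$ and $\alpha_2=\epsilon_2-\epsilon_3$. The lowest weight of $S^k\C^3$ is $k\epsilon_3$, and $c_1\epsilon_1+c_2\epsilon_2+c_3\epsilon_3$ differs from it by $c_1(\alpha_1+\alpha_2)+c_2\alpha_2$. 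So your recipe produces $\{a\alpha_2+b(\alpha_1+\alpha_2)\}$. Normalizing at the highest weight $k\epsilon_1$ instead gives $\{-a\alpha_1-b(\alpha_1+\alpha_2)\}$. Neither is the required set. This matters: with the character your recipe actually yields, $\overline{\alpha_2}$ would be a weight of $L_1$, and your deduction of $E_2\cdot v_{0,0}=0$ fails.

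The source is a convention mismatch. Hernandez--Jimbo's limit construction and Frenkel--Hernandez's formula $\chi_q=[\Psi]\chi$ are stated for highest $\ell$-weight modules. The paper's $L_1$ is a lowest $\ell$-weight module: it is annihilated by the $E_{r\delta-\alpha}$, generated by $U_q^+(\mathfrak b)$, and has weights in $\overline{Q_+}$. You must transport these results to that setting before using them.

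When you do, the node changes. The lowest $\ell$-weight of $W^{(i)}_k$ sits at node $i^*$, where $w_0\omega_i=-\omega_{i^*}$, and $1^*=2$ for $\ssl_3$. So the relevant approximants are the $W^{(2)}_k$, with underlying module $V(k\omega_2)\cong(S^k\C^3)^*$ and lowest weight $-k\epsilon_1$. Relative to it the weights are $c_2\alpha_1+c_3(\alpha_1+\alpha_2)$ with $c_2+c_3\le k$, each of multiplicity one. In the limit this gives exactly (2). Equivalently, negate the highest-weight-normalized character of $V(k\omega_1)$.

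Note that (3) needs no character at all. Since $L_1=U_q^+(\mathfrak b)\cdot v_{0,0}$, its weight-$\overline{\alpha_2}$ space is spanned by the vectors $x^+_{2,m}\cdot v_{0,0}$, $m\ge0$. These are killed by:
\begin{enumerate}
\item every $x^-_{1,r}$;
\item every $x^-_{2,r}$, because $[x^+_{2,m},x^-_{2,r}]=\phi^+_{2,m+r}/(q-q^{-1})$ kills $v_{0,0}$, as $\phi_2^+(z)\cdot v_{0,0}=v_{0,0}$;
\item the $E_{r\delta-\alpha_1-\alpha_2}$, for weight reasons.
\end{enumerate}
By the triangular decomposition this space would then generate a proper submodule, so it vanishes by irreducibility. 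The same computation, with $x^-_{1,1}x^+_{1,0}\cdot v_{0,0}=\frac{1}{q-q^{-1}}v_{0,0}\neq0$, shows that $\overline{\alpha_1}$ does occur.

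A smaller point in your sketch of (1): the scalar action of the $h_{i,s}$ only forces $x^-_{2,r}=0$ for $r\ge2$. The commutator argument would need $x^-_{2,0}$, which is not in $U_q(\mathfrak b)$. The case $r=1$, i.e.\ $E_{\delta-\alpha_2}$, must come from the cited theorem or from a separate argument using $\phi^+_{2,1}=0$ on $L_1$.
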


We now provide an explicit presentation of this positive prefundamental module. 

\begin{proposition}\label{moduleprefondamental}
    The module $L_1$ is the $U_q(\mathfrak{b})$-module with basis $(v_{a,b})_{a,b\in \Z_+}$ and actions, for $m \geqslant 0$ and $r \geqslant 1$:
\begin{gather*}
    x_{1,m}^+ \cdot v_{a,b} = \delta_{m=0} v_{a+1,b}; \quad x_{1,r}^-\cdot v_{a,b} = \delta_{r=1} q^{b}\frac{(a)_q}{q-q^{-1}}v_{a-1,b}; \\
    x^+_{2,m} \cdot v_{a,b} = -\delta_{m=0}q^{-a}(a)_{q} v_{a-1,b+1}; \quad x_{2,r}^- \cdot v_{a,b}= 0; \\
    E_{\alpha_1 + \alpha_2} \cdot v_{a,b} = q^{a}v_{a,b+1}; \quad 
    E_0 \cdot v_{a,b} = -q^{-a-2(b-1)}\frac{(b)_q}{q-q^{-1}}  v_{a,b-1}; \\
    K_1 \cdot v_{a,b} = -\phi_{1,1}^\pm\cdot v_{a,b}  = q^{2a+b}v_{a,b}; \quad K_2 \cdot v_{a,b} = q^{b-a} v_{a,b}; \quad K_0 \cdot v_{a,b} = q^{-2b-a} v_{a,b}
\end{gather*}
and all the other root vectors and $(\phi_{j,\delta_{1,j}+r}^\pm)_{j \in I,r \geq1}$ act by $0$.
\end{proposition}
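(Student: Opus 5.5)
We will construct the module concretely and then identify it with $L_1$. Take a vector space $V$ with basis $(v_{a,b})_{a,b\in\N}$, where $v_{a,b}$ is intended to span the weight space of weight $\overline{a\alpha_1+b(\alpha_1+\alpha_2)}$; this matches the decomposition in the lemma of Hernandez–Jimbo and Frenkel–Hernandez, since each weight space of $L_1$ is one-dimensional. We define the action of the Drinfeld–Jimbo generators of $U_q(\mathfrak b)$, namely $E_1=x_{1,0}^+$, $E_2=x_{2,0}^+$, $E_0$ and $K_0,K_1,K_2$, by the formulas in the statement. We then check the defining relations of $U_q(\mathfrak b)$ on $V$: the relations $K_iK_j=K_jK_i$ and $K_0K_1K_2=1$, the $K$–$E$ commutation relations (immediate, since each $E_i$ shifts the weight by $\alpha_i$), and the quantum Serre relations among $E_0,E_1,E_2$. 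These reduce to $q$-identities for the coefficients $q^{-a}(a)_q$ and $q^{-a-2(b-1)}(b)_q/(q-q^{-1})$. For example, the relation $E_1^2E_2-[2]_qE_1E_2E_1+E_2E_1^2=0$ applied to $v_{a,b}$ becomes a linear combination of $(a)_q,(a+1)_q,(a+2)_q$ with powers of $q$, which vanishes because $(m+1)_q=1+q^2(m)_q$.

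Next we identify $V$ with $L_1$. The vector $v_{0,0}$ has the correct weight. From $E_1^aE_{\alpha_1+\alpha_2}^b v_{0,0}\propto v_{a,b}$ we see that $V$ is cyclic on $v_{0,0}$. To show $V$ is irreducible, take any nonzero submodule $W$ and a weight vector $v_{a,b}\in W$. Applying $E_0$ repeatedly lowers $b$ with nonzero coefficients, since $q$ is not a root of unity. Applying $K_1^{-1}x_{1,1}^-$, which is proportional to $E_{\delta-\alpha_1}$, lowers $a$ with nonzero coefficient. Hence $v_{0,0}\in W$, so $W=V$. Because $L_1$ is the irreducible quotient of $M(\Psi_{1,1})$, once $v_{0,0}$ is shown to be a lowest $\ell$-weight vector of $\ell$-weight $\Psi_{1,1}$, uniqueness gives $V\cong L_1$.

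Checking the lowest $\ell$-weight condition requires the Drinfeld generators on $V$. We compute them via Beck's isomorphism together with the root-vector formulas of the previous proposition. The relation $x_{1,1}^-=K_1[E_0,E_2]_{q^{-1}}$ gives the formula for $x_{1,1}^-$ directly. For $x_{2,r}^-$, we use $E_{\delta-\alpha_2}=-K_2^{-1}x_{2,1}^-$, which is a $\mathcal T$-image of $[E_0,E_1]$-type expressions, and show it acts by $0$. The vanishing of the higher $x^\pm_{j,r}$, of $\phi^+_{j,\delta_{1j}+r}$, and of the other $E_{r\delta-\beta}$ then follows from the relations $x^\pm_{j,m+s}\propto[h_{i,s},x^\pm_{j,m}]$ together with the lemma of Frenkel–Hernandez. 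Independently, on $V$ itself $\phi_1^+(z)v_{0,0}=(1-z)v_{0,0}$ can be read off from $[x^+_{1,0},x^-_{1,1}]=\phi^+_{1,1}/(q-q^{-1})$; the statement's $\phi_{1,1}^+$ value is checked in exactly this way. Alternatively, we could avoid some of this work by directly citing the lemma (vanishing on $L_1$) and computing only the surviving operators on $L_1$, fixing the normalization of $v_{a,b}$ by $v_{a,b}:=q^{-ab}E_1^aE_{\alpha_1+\alpha_2}^bv_{0,0}$ or a similar choice, and deriving each coefficient by commuting past the generators.

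The main obstacle is the coefficient bookkeeping for $E_0$ and $x^+_{2,0}$. The cleanest route is to fix the normalization through $E_{\alpha_1+\alpha_2}\cdot v_{a,b}=q^av_{a,b+1}$ and $E_1 v_{a,b}=v_{a+1,b}$. Then $E_2v_{a,b}$ is computed from $E_{\alpha_1+\alpha_2}=q^{-1}E_1E_2-E_2E_1$ by induction on $a$, using $E_2v_{0,b}=0$, which follows by weight considerations since $L_1$ has no weight $b(\alpha_1+\alpha_2)+\alpha_2$. Likewise $E_0v_{a,b}$ is obtained by induction using $[E_0,E_1]=0$, which holds on $V$ because $x^-_{2,1}$ acts by zero, together with the known action of $E_0$ on $E_{\alpha_1+\alpha_2}$, computed from $[E_0,F]$-type relations and $K_0$. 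We expect the $q$-power in the $E_0$ coefficient to be the delicate point.
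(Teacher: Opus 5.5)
Your construct-and-identify route breaks exactly at the checks you assert but do not carry out. On $V$, the operators $E_{\alpha_1+\alpha_2}$, $x^-_{1,1}$ and $\phi^+_{1,1}$ are not free data. They are determined by $E_0,E_1,E_2,K_i$, so each listed formula must be recomputed from those, and with the formulas as stated this fails. From $E_1v_{a,b}=v_{a+1,b}$ and $E_2v_{a,b}=-q^{-a}(a)_qv_{a-1,b+1}$ you get $(q^{-1}E_1E_2-E_2E_1)v_{a,b}=q^{-a-1}((a+1)_q-(a)_q)v_{a,b+1}=q^{a-1}v_{a,b+1}$, not $q^{a}v_{a,b+1}$. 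Likewise, Beck's formula $x^-_{1,1}=K_1[E_0,E_2]_{q^{-1}}$ with the stated $E_0$ gives $x^-_{1,1}v_{a,b}=q^{b-1}\frac{(a)_q}{q-q^{-1}}v_{a-1,b}$, hence $\phi^+_{1,1}v_{0,0}=-q^{-1}v_{0,0}$. The defining relations of $U_q(\mathfrak b)$ do hold on $V$: they are homogeneous in each $E_i$, so they cannot detect a constant factor on $E_2$. But $V$ is the pullback of $L_1$ along the automorphism $E_2\mapsto q^{-1}E_2$. Equivalently, after rescaling $v_{a,b}$ by $q^{b}$, it is the pullback along the spectral shift $E_0\mapsto q^{-1}E_0$. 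Its lowest $\ell$-weight is $(1-q^{-1}z,1)$, so $V\not\cong L_1$. Your claim that the $\phi^+_{1,1}$ value ``is checked in exactly this way'' holds only if you plug in the stated $x^-_{1,1}$ rather than computing it on $V$.

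The culprit is the coefficient of $x^+_{2,0}$. Your last paragraph uses the same recursion as the paper's proof, $\lambda_{a,b}=q^{-1}\lambda_{a-1,b}-q^{a-1}$ with $\lambda_{0,b}=0$. It solves to $\lambda_{a,b}=-[a]_q=-q^{1-a}(a)_q$; the printed $-q^{-a}(a)_q$ is a slip in the paper's proof that the statement inherits. With $-[a]_q$ all the formulas are consistent: $[a+1]_q-q^{-1}[a]_q=q^a$, and $K_1[E_0,E_2]_{q^{-1}}$ returns $q^b(a)_q/(q-q^{-1})$. Your route then becomes a genuine alternative to the paper's. The paper never checks relations; it works inside $L_1$, takes the basis and all vanishing statements from the Frenkel--Hernandez lemma, and obtains each surviving coefficient by commutation.

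Two further repairs are needed.
\begin{enumerate}
\item For $E_0$ you must use $E_0E_1=q^{-1}E_1E_0$ on $L_1$, since the $q^{-1}$-commutator is $\pm E_{\delta-\alpha_2}$, which acts by $0$. Writing $[E_0,E_1]=0$ would lose the factor $q^{-a}$.
\item You may not apply the Frenkel--Hernandez lemma to $V$ before knowing $V\cong L_1$, and you do not need to. First, $E_{r\delta-\beta}v_{0,0}=0$ because $-\beta$ is not a weight of $V$. Next, $\phi^+_2(z)v_{0,0}=v_{0,0}$ because $x^+_{2,0}v_{0,0}=0$. Finally, $\phi^+_{1,m}v_{0,0}=-(q-q^{-1})x^-_{1,m}v_{1,0}$ vanishes for $m\geq2$: $h_{1,1}=K_1^{-1}\phi^+_{1,1}/(q-q^{-1})$ acts on the corrected $V$ by the scalar $-1/(q-q^{-1})$, which forces $x^-_{1,m}=0$ for $m\geq 2$. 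Only after the identification can the lemma supply the remaining vanishing claims.
\end{enumerate}
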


\begin{proof}
    Consider the basis of $U_q^+(\mathfrak b)$ of root vectors given in section 4. We have that $L_1$ is spanned by the following set $$\{ E_{\beta_0}^{a_0}E_{\beta_{-1}}^{a_{-1}} E_{\beta_{-2}}^{a_{-2}} \cdots  v_{0,0} \quad | \quad  a_0,a_{-1},a_{-2},\dots  \in \N, \text{ almost all } a_k = 0. \}$$ Moreover, according to the previous lemma, the following vectors span $L_1$:
    \[ v_{a,b} := E_{1}^{a}E_{\alpha_1 + \alpha_2}^b\cdot v_{0,0} \text{ for } a,b\in \N.\]
    Since each $v_{a,b}$ is of weight $\overline{a\alpha_1} + \overline{b(\alpha_1 + \alpha_2)}$, we deduce that all $v_{a,b}$ are non zero vectors of different weights that span $L_1$, thus forming a basis of $L_1$.

First, observe that, by definition, $x_{1,0}^+ \cdot v_{a,b} = v_{a+1,b}$. Using the quantum Serre's relations, we have $E_{\alpha_1 + \alpha_2}E_1 = qE_1E_{\alpha_1+\alpha_2}$, from which we deduce $E_{\alpha_1 + \alpha_2}\cdot v_{a,b} = q^av_{a,b+1}$.

    To compute the action of $x_{2,0}^+$, we take advantage of the expression of $E_{\alpha_1+\alpha_2}$. Indeed:
    \[ E_2E_1 = q^{-1}E_1E_2 - E_{\alpha_1+\alpha_2}.\]
    In addition, from the quantum Serre's relations, we know $E_2E_{\alpha_1+\alpha_2} = qE_{\alpha_1+\alpha_2}E_2$. Denote $E_2\cdot v_{a,b} = \lambda_{a,b}v_{a-1,b+1}$. Then:
    $ \lambda_{a,b} = q^{-1}\lambda_{a-1,b} - q^{a-1}$.
    Since $\lambda_{0,b} = 0$ (we already saw by grading arguments that $E_2 \cdot v_{0,0} = 0$), we finally have 
    $\lambda_{a,b} = -q^{-a}(a)_qv_{a-1,b+1}$.

    Now, denote $x_{1,1}^- \cdot v_{a,b} = \lambda_{a,b} v_{a-1,b}$. Knowing that $x_{1,1}^- E_1 = E_1x_{1,1}^- - \frac{\phi_{1,1}^+}{q-q^{-1}}$, we deduce \[ \lambda_{a,b} = \frac{q^{2(a-1)+b}}{q-q^{-1}} + \lambda_{a-1,b}.\]
    Moreover, one can check that $x_{1,1}^- E_{\alpha_1 + \alpha_2} = -E_2x_{1,1}^-E_1 + q^{-1}x_{1,1}^-E_1E_2$ and then deduce $\lambda_{0,b}= 0$. The above equation then has a unique solution given by: 
    \[ \lambda_{a,b} = q^b \frac{(a)_q}{q-q^{-1}}.\]

    Finally, denote $E_0\cdot v_{a,b} = \lambda_{a,b} v_{a,b-1}$. One can check $E_0E_1 = q^{-1} E_1E_0 - K_2^{-1}x_{2,1}^-$.
    Since $x_{2,1}^-$ acts by $0$ over $L_1$, we deduce $ \lambda_{a,b} = q^{-a}\lambda_{0,b}$.
    Then, using the quantum Serre's relations for $E_0$ and $E_2$:
    \[ q^{-(a-2)}\lambda_{0,b+2} -(q+q^{-1})q^{-(a-1)}\lambda_{0,b+1} + q^{-a}\lambda_{0,b} = 0.\]
    By definition, we know $\lambda_{0,0} = 0$. We just need to determine $\lambda_{0,1}$. Since $E_0E_2 = q^{-1}E_2E_0 + K_1^{-1}x_{1,1}^-$, we have:
    \[ E_0E_{\alpha_1 + \alpha_2} = q^{-2}E_{\alpha_1+\alpha_2}E_0 - K_1^{-1}x_{1,1}^-E_1 + q^{-2}E_1K_1^{-1}x_{1,1}^- - q^{-1}K_2^{-1}x_{2,1}^-E_2 + q^{-1}E_2K_2^{-1}x_{2,1}^-\]
    therefore $\lambda_{0,1} = -\frac{1}{q-q^{-1}}$. Finally, we have
    \[ \lambda_{a,b} = -q^{-a-2(b-1)} \frac{(b)_q}{q-q^{-1}}.\]
\end{proof}
%Note that $L_1$ can be equipped \cite{Frenkandez} with a $\N$-grading such that $(L_1)_0 := \C v_{0,0}$ and, for $m \geq 0, p\geq 0,  s > 0, j \in I, \beta \in R$:
%\begin{gather*} (L_1)_m \text{ is stable by } k_j^{\pm 1}, \quad E_{s\delta - \beta} \cdot \po L_1 \pf_m \subset \po L_1 \pf_{m-s}; \\ \quad \po h_{j,s} - \lambda_{j,s} \pf \cdot \po L_1 \pf_m \subset \po L_1 \pf_{m-s}; \quad E_{t\delta + \beta} \cdot \po L_1 \pf_m \subset  \sum_{p = 0}^{\langle \omega_1^\vee, \beta \rangle}\po L_1 \pf_{m-t+p}
%\end{gather*}
%where $h_{j,s} \cdot v_{0,0} = \lambda_{j,s} v_{0,0}$. Setting $\deg(v_{a,b}) = a+2b$, we obtain this $\N$-grading. In addition, note that their weights are given by $\wt(v_{a,b}) = a\alpha_1 + b(\alpha_1 +\alpha_2)$.

Now, we can define the monodromy matrices associated with $L_1$. These are defined by applying the first factor of the universal R-matrices on representations of the upper Borel subalgebra \cite[(4.32)]{Frenkikhin92}\cite[Definition 8.11]{Zhang}.

     Let $M$ be a $U_q(\mathfrak{b})$-module of lowest $\ell$-weight. Let $b_0 \in M$ be a lowest $\ell$-weight vector. Extend it to a weight basis $\mathbf{B}$. The \textbf{monodromy matrices} $(t^\pm_{b_1,b_2}(z))_{b_1,b_2 \in \mathbf B}$ associated with $M$ are the formal series with coefficients in $U_q(\mathfrak c)$ defined by
    \[ \mathcal{R}^\pm(z) \cdot\po b_2 \otimes 1 \pf = \sum\limits_{b_1 \in \mathbf B} b_1 \otimes t^\pm_{b_1,b_2}(z) \text{ for } b_2 \in \mathcal{ B}.\] 

With this construction, Zhang proved \cite[(9.52)]{Zhang} that Theta series can be expressed in terms of the entries of monodromy matrices. For $M = L_i$, we have, for example, for $\beta \in Q_+$:
\begin{equation}\label{thetamonodromy}
    \Theta_{i,\beta}(z) = \sum_{b \in \mathbf{B} : ~ \wt(b) = \overline{\beta}} t_{b,b_0}^+ (z)\otimes t_{b_0,b}^- (z)K_{\beta}^{-1}
\end{equation}
    where $K_\beta := K_1^{a_1}\dots K_{n}^{a_n}$ for $\beta = \sum\limits_{j\in I} a_j\alpha_j$.

We will apply this result with the basis $\mathbf{B} = (v_{a,b})_{a,b\in \N}$. Note that $\wt(v_{a,b}) = \overline{a\alpha_1} + \overline{b(\alpha_1 +\alpha_2)}$.

\section{Formula for the coproduct of T-series in type $A_2$}
We establish the second main result of this paper: the formula for the coproduct of T-series in type $A_2$. We compute the monodromy matrices associated with $L_1$ and deduce the formula for the Theta series. We then apply a morphism that exchanges the simple roots in $\ssl_3$ to obtain the formula for $\Delta(T_2(z))$. We conclude by checking the compatibility between our formula and the Finkelberg-Tsymbaliuk coproduct formulas
\cite{Finkeliuk}.

\begin{theoreme}\label{coproductTseries}
    The coproducts of the T-series for the quantum affine algebra $U_q(\hat\ssl_3)$ can be written, for $i \in \{ 1,2\}$
    \[ T_i(z) = \po 1 \otimes T_i(z) \pf \times \Theta_i(z) \times \po T_i(z) \otimes 1 \pf \]
    where the Theta series $\Theta_i(z)$ are given by the following two formulas:
    \begin{gather*}
    \Theta_1(z) = \exp_q\po (q-q^{-1})x_{1,0}^-\otimes x_{1,-1}^+z \pf \exp_q\po (q-q^{-1}) \co x_{1,0}^-,x_{2,0}^- \cf_q \otimes \co x_{2,0}^+,x_{1,{-1}}^+ \cf_{q^{-1}} z \pf ; \\
    \Theta_2(z) =  \exp_q\po (q-q^{-1}) x_{2,0}^-\otimes x_{2,-1}^+ z \pf  \exp_q\po (q-q^{-1}) \co x_{2,0}^-,x_{1,0}^- \cf_q \otimes \co x_{1,0}^+,x_{2,-1}^+  \cf_{q^{-1}} z \pf.
    \end{gather*}
    Moreover, the q-exponentials commute in both formulas.
\end{theoreme}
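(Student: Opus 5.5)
We compute $\Theta_1(z)$ through formula (\ref{thetamonodromy}) with $M = L_1$, $b_0 = v_{0,0}$ and the basis $\mathbf B = (v_{a,b})_{a,b\in\N}$ of Proposition \ref{moduleprefondamental}. We then deduce $\Theta_2(z)$ from $\Theta_1(z)$ using the Dynkin diagram automorphism exchanging the nodes $1$ and $2$.

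The first step is to simplify the triangular parts $\mathcal R^\pm(z)$ acting on $L_1 \otimes 1$. By the proposition and the lemma of Hernandez--Jimbo and Frenkel--Hernandez, almost all root vectors act by zero on $L_1$.
\begin{itemize}
\item In $\mathcal R^+(z)$ only the factors with $E_{\alpha_1}=x_{1,0}^+$, $E_{\alpha_2}=x_{2,0}^+$ and $E_{\alpha_1+\alpha_2}$ (all at $m=0$) survive, so there is no $z$-dependence. In the decreasing order $\beta_0 \succ \beta_{-1} \succ \beta_{-2}$, and writing $F_{\alpha_1+\alpha_2} = -[x_{1,0}^-,x_{2,0}^-]_q$, we get
\[ \mathcal R^+(z)|_{L_1} = \exp_q\po (q^{-1}-q)E_1\otimes F_1\pf\exp_q\po(q^{-1}-q)E_{\alpha_1+\alpha_2}\otimes F_{\alpha_1+\alpha_2}\pf\exp_q\po(q^{-1}-q)E_2\otimes F_2\pf .\]
\item In $\mathcal R^-(z)$ only $E_{\delta-\alpha_1} = -K_1^{-1}x_{1,1}^-$, $E_0 = E_{\delta-\alpha_1-\alpha_2}$ and the imaginary (Cartan) part survive, which gives the factor $z$.
\end{itemize}

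The second step extracts the monodromy entries. For $t^+_{v_{a,b},v_{0,0}}$ we apply $\mathcal R^+$ to $v_{0,0}\otimes 1$. Since $E_2 v_{0,0}=0$, the rightmost factor acts trivially. Using $v_{a,b} = E_1^aE_{\alpha_1+\alpha_2}^b v_{0,0}$, we obtain
\[ t^+_{v_{a,b},v_{0,0}} = \frac{(q^{-1}-q)^{a+b}}{(a)_q!\,(b)_q!}F_1^aF_{\alpha_1+\alpha_2}^b .\]
For $t^-_{v_{0,0},v_{a,b}}$ we must bring $v_{a,b}$ back to $v_{0,0}$. Only $E_{\delta-\alpha_1}$ (lowering $a$) and $E_0$ (lowering $b$) can do this, with the explicit coefficients $q^b(a)_q/(q-q^{-1})$ and $-q^{-a-2(b-1)}(b)_q/(q-q^{-1})$ from Proposition \ref{moduleprefondamental}. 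Because the ordering puts $E_{\delta-\alpha_1-\alpha_2}$ before $E_{\delta-\alpha_1}$, only one ordered monomial contributes, namely $F_{\delta-\alpha_1-\alpha_2}^{b}F_{\delta-\alpha_1}^{a}$ up to order, times a factor $z^{a+b}$. The Cartan part acts on $v_{0,0}$ by a scalar series, and I expect it to cancel by the normalization of $\Theta_{i,0}=1\otimes1$.

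The third step multiplies these entries out. The factorials $(a)_q!\,(b)_q!$ from the $q$-exponentials cancel against the products of $(a)_q$, $(b)_q$ from the module action, leaving one $q$-exponential denominator each. The powers of $q$ and the $K$'s are absorbed using $F_{\delta-\alpha_1} = -x_{1,-1}^+K_1$, the factor $K_\beta^{-1}$, and the identity $F_0 = [x_{2,0}^+,x_{1,-1}^+]_{q^{-1}}K_0^{-1}$ with $K_0K_1K_2=1$. The double sum then factorizes as
\[ \sum_{a}\frac{((q-q^{-1})x_{1,0}^-\otimes x_{1,-1}^+z)^a}{(a)_q!}\sum_b \frac{((q-q^{-1})[x_{1,0}^-,x_{2,0}^-]_q\otimes[x_{2,0}^+,x_{1,-1}^+]_{q^{-1}}z)^b}{(b)_q!},\]
which is the claimed product.

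The fourth step proves that the two $q$-exponentials commute. It suffices to show that $X = x_{1,0}^-\otimes x_{1,-1}^+$ and $Y = [x_{1,0}^-,x_{2,0}^-]_q\otimes[x_{2,0}^+,x_{1,-1}^+]_{q^{-1}}$ commute. The $q$-Serre relations give $x_{1,0}^-F_{\alpha_1+\alpha_2} = q^{\pm1}F_{\alpha_1+\alpha_2}x_{1,0}^-$, and the Drinfeld relations give the opposite power for the second tensor factors, so the $q$-powers cancel. Finally, $\Theta_2(z)$ follows by applying the automorphism $x^\pm_{1,m}\leftrightarrow x^\pm_{2,m}$, which preserves the coproduct and maps $T_1$ to $T_2$.

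The main obstacle is the second and third steps: tracking the precise $q$-powers, signs and $K$-factors when commuting $K_1^{-1}$ and $K_0$ through the monodromy entries so that everything collapses to exactly $(q-q^{-1})$ and the given $q$-commutators. I would first check the terms with $a+b=1$ against the Finkelberg--Tsymbaliuk formula for $\Delta(h_{1,-1})$ to fix conventions, and only then run the general $(a,b)$ computation.
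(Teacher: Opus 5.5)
Your proposal is correct and follows the paper's proof essentially step for step: formula \eqref{thetamonodromy} for $L_1$ with the basis $(v_{a,b})$, truncation of $\mathcal R^{+}(z)$ to the $E_1,E_{\alpha_1+\alpha_2},E_2$ factors and of $\mathcal R^-(z)$ to the $E_{\delta-\alpha_1},E_0$ factors, factorization of the double sum into two $q$-exponentials, their commutation via opposite $q$-commutation in the two tensor factors, and a diagram automorphism for $\Theta_2$. Three small corrections: the paper's $\mathcal R^-(z)$ contains only real root factors, so there is no Cartan part to cancel; since the $E_{\delta-\alpha_1}$ factor stands to the left of the $E_0$ factor, the surviving entry is proportional to $F_{\delta-\alpha_1}^aF_0^b$ in exactly this order, which is what makes the sum factor as the stated ordered product; and your unsigned swap $x^\pm_{1,m}\leftrightarrow x^\pm_{2,m}$ is indeed a Hopf automorphism (the paper's $\psi$ composed with $E_0\mapsto -E_0$, $F_0\mapsto -F_0$) that sends $T_1(z)$ to $T_2(z)$, whereas the paper's $\psi$ gives $T_2(-z)$.
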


\begin{proof}
First, we prove that one can deduce the formula for $\Theta_2(z)$ from the formula for $\Theta_1(z)$. Indeed, consider the Hopf algebra automorphism of $U_q\po \hat{\ssl_3} \pf$ :
\[ \psi : \left|  \begin{array}{cc}
     E_1 \mapsto E_2; & E_2 \mapsto E_1;  \\
     F_1 \mapsto F_2; & F_2 \mapsto F_1; \\
    K_1 \mapsto K_2; & K_2 \mapsto K_1.
\end{array} \right.  \]
For simplicity, we also denote $\psi(0) = 0, \psi(1) = 2, \psi(2) = 1$. One can check $\psi \mathcal T_i = \mathcal T_{\psi(i)} \psi$ for $i \in \hat I$. Then, using definition 4.6 in \cite{Beck}, one has for $s > 0$
\[ \psi(x_{1,-s}^+) = (-1)^s x_{2,-s}^+.\]
Since $ \co  x_{i,-s}^+,x_{i,0}^-,\cf = \frac{\phi_{i,-s}^-}{q^{-1}-q}$, we deduce $\psi(\phi_{1,-s}^-) = (-1)^s\phi^-_{2,-s}$. Then, by induction on $s$, one has $\psi(h_{1,-s}) = (-1)^sh_{2,-s}$, so we have
$\psi \po T_1(z) \pf = T_2(-z)$ and the formula for $T_2(z)$ from $T_1(z)$.

Now, let us prove the formula for $\Theta_1(z)$. Consider the first positive prefundamental module $L_1$ and $(v_{a,b})_{a,b\in \N}$ its basis described in proposition \ref{moduleprefondamental}. To obtain the positive and negative parts of the monodromy matrix associated with $L_1$, we have to compute the following:
\begin{gather*} \mathcal{R}^+(z) \cdot (v_{0,0} \otimes 1) = \sum\limits_{a,b \in \mathbb N} v_{a,b} \otimes t^+_{v_{a,b},v_{0,0}}(z); \\
\mathcal R^-(z) \cdot (v_{a,b}\otimes 1) = \sum\limits_{k,l\in\mathbb N} v_{k,l} \otimes t^-_{v_{k,l},v_{a,b}}(z)
\end{gather*}
Note that we are only interested in $t^-_{v_{0,0},v_{a,b}}(z)$ for the negative part.

Let us start with the positive part. Since we know that almost all terms of $\mathcal{R}^+(z)$ vanish over $M$ according to the proposition \ref{moduleprefondamental}, we have that the term $ \mathcal{R}^+(z) \cdot (v_{0,0} \otimes 1)$ is equal to
\[\sum\limits_{a,b,c \in \N}\frac{\po (q^{-1}-q) E_1 \otimes F_1 \pf^a}{(a)_q!} \frac{\po (q^{-1}-q)E_{\alpha_1 + \alpha_2} \otimes F_{\alpha_1 + \alpha_2} \pf^b}{(b)_q!} \frac{\po (q^{-1}-q) E_2 \otimes F_2 \pf^c}{(c)_q!} \cdot (v_{0,0}\otimes 1)\]

But with the same proposition:
\[
    E_2^c \cdot v_{0,0} = \delta_{c=0} v_{0,0}; \quad
    E_{\alpha_1 + \alpha_2}^b = v_{0,b}; \quad
    E_1^a \cdot v_{0,b} = v_{a,b}.
\]
Hence:
\[ \mathcal{R}^+(z) \cdot (v_{0,0} \otimes 1) = \sum\limits_{a,b \in \N} v_{a,b} \otimes \frac{(q^{-1}-q)^a}{(a)_q!}F_1^a \frac{(q^{-1}-q)^b}{(b)_q!}F_{\alpha_1 + \alpha_2}^b. \]
Finally, we have
%\[ t^+_{v_{a,b},v_{0,0}}(z) = \frac{(q^{-1}-q)^a}{(a)_q!}F_1^a \frac{(q^{-1}-q)^b}{(b)_q!}F_{\alpha_1 + \alpha_2}^b \]
%which we can rewrite in terms of generators of the Drinfeld's new realization:
\[ t^+_{v_{a,b},v_{0,0}}(z) = \frac{(q^{-1}-q)^a}{(a)_q!}(x_{1,0}^-)^a \frac{(q-q^{-1} )^b}{(b)_q!} \co x_{1,0}^-,x_{2,0}^- \cf_q^b\]

Now, let us compute the negative part. Again, we know that almost all terms of $\mathcal{R}^-(z)$ vanish over $M$, then:
\[ \mathcal{R}^-(z) \cdot (v_{a,b} \otimes 1) = \sum\limits_{k,l\in \mathbb N} \frac{\po (q-q^{-1}) E_{\delta-\alpha_1}\otimes F_{\delta-\alpha_1} z \pf^k}{(k)_q!} \frac{\po (q-q^{-1}) E_0\otimes F_0 z \pf^l}{(l)_q!} \cdot (v_{a,b}\otimes 1). \]

Yet, since:
\[ E_0^l \cdot v_{a,b} \in \mathbb Cv_{a,b-l} \text{ and } E_{\delta-\alpha_1}^k \cdot v_{a,b-l} \in \mathbb Cv_{a-k,b-l}\]
we are only interested in the terms for $k=a$ and $l=b$:
\[ E_0^b \cdot v_{a,b} = (-1)^b \frac{(b)_q!}{(q-q^{-1})^b}q^{-ab-b(b-1)}v_{a,0}; \quad E_{\delta - \alpha_1}^a \cdot v_{a,0} = (-1)^a \frac{(a)_q!}{(q-q^{-1})^a}q^{-a(a-1)}.\]

Finally:
\[ t^-_{v_{0,0},v_{a,b}}(z) = q^{-a(a-1)}F_{\delta-\alpha_1}^a z^a  q^{-ab-b(b-1)}F_ 0^bz^b.\]

To conclude, we just have to rewrite this expression with the generators of the Drinfeld new realization:
\begin{gather*} F_{\delta - \alpha_1}^a = (-x_{1,-1}^+K_1)^a = (-1)^aq^{a(a-1)}(x_{1,-1}^+)^aK_1^a; \\
K_1^aF_0^b = q^{ab}F_0^bK_1^a;\\
F_0^b = \po \co x_{2,0}^+, x_{1,-1}^+ \cf_q^{-1} K_0^{-1} \pf^b = q^{b(b-1)} \po \co x_{2,0}^+,x_{1,-1}^+\cf_{q^{-1}} \pf^b (K_0^{-1})^b.
\end{gather*}

Hence:

\[ t^-_{v_{0,0},v_{a,b}}(z) = (-1)^a \po x_{1,-1}^+ \pf^a \po \co x_{2,0}^+,x_{1,-1}^+ \cf_{q^{-1}} \pf ^b K_1^a (K_0^{-1})^bz^{a+b}. \]

Using equation \eqref{thetamonodromy}, we have:
\[ \Theta_{1,a\alpha_1 + b(\alpha_1 + \alpha_2) + c\alpha_2} = \delta_{c=0}\frac{(q-q^{-1})^a}{(a)_q!} (x_{1,0}^-)^a\frac{(q-q^{-1} )^b}{(b)_q!} \co x_{1,0}^-,x_{2,0}^- \cf_q^b\otimes (x_{1,-1}^+)^a \co x_{2,0}^+,x_{1,-1}^+ \cf_{q^{-1}} ^b z^{a+b} \]
so
\[ \Theta_1(z) = \exp_q\po (q-q^{-1})x_{1,0}^-\otimes x_{1,-1}^+z \pf \exp_q\po (q-q^{-1}) \co x_{1,0}^-,x_{2,0}^- \cf_q \otimes \co x_{2,0}^+,x_{1,{-1}}^+ \cf_{q^{-1}} z \pf.\]

Moreover, the terms in the q-exponential commute. For example, for $\Theta_1(z)$, note that
\[ x_{1,0}^-\co x_{1,0}^-,x_{2,0}^- \cf_q = q^{-1} \co x_{1,0}^-,x_{2,0}^- \cf_qx_{1,0}^-; \quad x_{1,-1}^+ \co x_{2,0}^+,x_{1,-1}^+\cf_{q^{-1}} = q \co x_{2,0}^+,x_{1,-1}^+\cf_{q^{-1}} x_{1,-1}^+. \]
Thus, the q-exponentials commute. 
\end{proof}

    One can check that these formulas are compatible with the formulas of the coproduct of the Drinfeld-Cartan elements given by Finkelberg-Tsymbaliuk in \cite[Theorem 10.14]{Finkeliuk}.

    In type $A_2$, the inverse of the quantum Cartan matrix is $\tilde{C}(q) = \frac{1}{[3]_q} \begin{pmatrix}
       [2]_q  &  1 \\
        1 & [2]_q
    \end{pmatrix}$. Now, we look at the coefficient of $z$ in $T_1(z)$ (resp. $T_2(z)$) that we denote $T_{1,1}$ (resp. $T_{2,1}$). Then we have $$T_{1,1} = \frac{[2]_q}{[3]_q}h_{1,-1} + \frac{1}{[3]_q}h_{2,-1}; \quad T_{2,2}= \frac{1}{[3]_q} h_{1,-1} + \frac{[2]_q}{[3]_q} h_{2,-1}.$$ Note that $[2]_q^2 - 1 = [3]_q$, then we get \[ \left\{ \begin{array}{cc}
         h_{1,-1} & = [2]_q T_{1,1} - T_{2,1}  \\
         h_{2,-1} & = [2]_q T_{2,1} - T_{1,1}
    \end{array} \right..\]

    On the other hand, looking at the coefficient $z$ in the theorem \ref{coproductTseries}:
    \begin{align*} \Delta\po  T_{1,1} \pf =& \frac{[2]_q}{[3]_q}  \square(h_{1,-1})  + \frac{1}{[3]_q}\square\po h_{2,-1} \pf + (q-q^{-1})x_{1,0}^- \otimes x_{1,-1}^+ \\ & + (q-q^{-1}) \co x_{1,0}^-,x_{2,0}^-\cf_q \otimes \co x_{2,0}^+, x_{1,-1}^+\cf_{q^{-1}}  \\
    \Delta\po  T_{2,1} \pf =& \frac{[2]_q}{[3]_q}  \square(h_{2,-1})  + \frac{1}{[3]_q}\square\po h_{1,-1} \pf + (q-q^{-1})x_{2,0}^- \otimes x_{2,-1}^+ \\ &+ (q-q^{-1}) \co x_{2,0}^-,x_{1,0}^-\cf_q \otimes \co x_{1,0}^+, x_{2,-1}^+\cf_{q^{-1}} 
    \end{align*}
    %\[ \Delta\po  \frac{[2]_q}{[3]_q}h_{1,-1} + \frac{1}{[3]_q} h_{2,-1} \pf = \frac{[2]_q}{[3]_q}  \square(h_{1,-1})  + \frac{1}{[3]_q}\square\po h_{2,-1} \pf + (q-q^{-1})x_{1,0}^- \otimes x_{1,-1}^+ + (q-q^{-1}) \co x_{1,0}^-,x_{2,0}^-\cf_q \otimes \co x_{2,0}^+, x_{1,-1}^+\cf_{q^{-1}}  \]
    where $\square(x) = x\otimes 1 + 1 \otimes x$. Note that \[ \co x_{1,0}^+,x_{2,-1}^+ \cf_{q^{-1}} = - \co x_{2,0}^+, x_{1,-1}^+\cf_{q^{-1}}; \quad [2]_q \co x_{1,0}^-,x_{2,0}^-  \cf_q + \co x_{2,0}^-,x_{1,0}^- \cf_q = -q^{2} \co x_{2,0}^-, x_{1,0}^- \cf_{q^{-3}} \] Then, one can compute:
    \begin{align*}
    \Delta(h_{1,-1}) =& \Delta \po [2]_q T_{1,1} - T_{2,1}\pf \\  =&\square(h_{1,-1}) + (q^2-q^{-2}) x_{1,0}^-\otimes x_{1,-1}^+ - (q-q^{-1})x_{2,0}^-\otimes x_{2,-1}^+  \\&- (q^3-q) \co x_{2,0}^-,x_{1,0}^- \cf_{q^{-3}} \otimes \co x_{2,0}^+,x_{1,-1}^+ \cf_{q^{-1}} \\
        \Delta(h_{2,-1}) =& \Delta\po [2]_qT_{2,1} - T_{2,1} \pf \\ =&\square(h_{2,-1}) +  \po q^2 - q^{-2} \pf x_{2,0}^- \otimes  x_{2,-1}^+ - (q-q^{-1}) x_{1,0}^- \otimes  x_{1,-1}^+ \\&+ (q^3 -q) \co x_{1,0}^-, x_{2,0}^-  \cf_{q^{-3}} \otimes \co x_{2,0}^+,x_{1,-1}^+ \cf_{q^{-1}}.
    \end{align*} 
    which are the formulas obtained in \cite{Finkeliuk}. 
    
    %Using the formulas for the coproducts of \cite{Finkeliuk}, recalled here at the end of section 3.1, and noting that
    %\[ \frac{[2]_q}{[3]_q}\po q^2 - q^{-2} \pf - \frac{1}{[3]_q}\po q-q^{-1} \pf = \frac{\po q- q^{-1}  \pf}{[3]_q} \po \po q+ q^{-1} \pf^2 - 1 \pf = \po q-q^{-1} \pf \]
    %and
    %\[ -q^2[2]_q\po q-q^{-1} \pf \co x_{2,0}^-,x_{1,0}^- \cf_{q^{-3}} + q^2\po q - q^{-1} \pf \co x_{1,0}^-,x_{2,0}^- \cf_{q^{-3}} = \po q^{-2} - q^4 \pf x_{2,0}^-x_{1,0}^- + \po q^3 - q^{-3} \pf x_{1,0}^-x_{2,0}^- \]
    %we deduce
    %\[ -q^2[2]_q\po q-q^{-1} \pf \co x_{2,0}^-,x_{1,0}^- \cf_{q^{-3}} + q^2\po q - q^{-1} \pf \co x_{1,0}^-,x_{2,0}^- \cf_{q^{-3}} =  [3]_q\po q - q^{-1} \pf \co x_{1,0}^-,x_{2,0}^- \cf_q\]
    %hence the correspondence.

\section{Appendix : shifted Yangians and Theta series as associators}
In this section, we will prove the proposition \ref{intertwiningyangian}, following the proof of \cite[Theorem 5.7]{Zhang}. We shall need the notion of shifted Yangians, a family of algebras that generalize the standard Yangian. It was introduced in by Brundan-Kleshchev \cite{BrundanKleshchev} in type A and \cite{GKLOtermes, BFN} in full generality. Theta series can be interpreted as associators for particular representations of shifted Yangians. From this property, we check that the Theta series must satisfy the equations of proposition \ref{intertwiningyangian}.

\begin{definition}
    Let $\mu = \sum\limits_{i\in I} m_i \omega_i^\vee \in P^\vee$ a coweight. The \textbf{shifted Yangian} $Y_\mu$ is the associative algebra defined by generators $x_{i,m}^-,x_{i,m}^+,\xi_{i,p}$ for $i \in I, m \in \mathbb N, p \in \mathbb Z$ that satisfy the following relations:
    \begin{gather*}
        \co \xi_{i,p},\xi_{j,p'}\cf = 0, \quad \co x^+_{i,m},x^-_{j,m'} \cf = \delta_{i,j}\xi_{i,m+m'}, \\ 
        \co \xi_{i,p+1},x^\pm_{j,m} \cf - \co \xi_{i,p},x^\pm_{j,m+1} \cf = \pm \frac{1}{2}c_{ij}\po \xi_{i,p} x^\pm_{j,m} + x^\pm_{j,m}\xi_{i,p} \pf,  \\
        \co x^\pm_{i,m+1},x^\pm_{j,m'} \cf - \co x^\pm_{i,m},x^\pm_{j,m'+1} \cf = \pm \frac{1}{2}c_{ij} \po x^\pm_{i,m} x^\pm_{j,m'} + x^\pm_{j,m'} x^\pm_{i,m} \pf, \\
        \ad^{1-c_{ij}}_{x_{i,0}^\pm}(x_{j,0}^\pm) = 0 \quad \quad \text{if }i\neq j, \\
        \xi_{i,-m_i-1} = 1; \quad \xi_{i,p} = 0 \text{ for } p < -m_i -1.
    \end{gather*}
    Define the generating series
    \[ \xi_i(z) := \sum\limits_{p\in \Z} \xi_{i,p}z^{-p-1}; \quad x_{i}^\pm(z) := \sum\limits_{m\in\N} x_{i,m}^\pm z^{-m-1}\]
    which are Laurent series with coefficients in $Y_\mu$ and leading terms $z^{m_i}$ and $x_{i,0}^\pm z^{-1}$.
    
    A coweight $\sum\limits_{i\in I} m_i \omega_i^\vee \in P^\vee$ is said to be dominant (resp. antidominant) if $m_i \geq 0$ (resp. $m_i \leq 0$) for all $i \in I$.
\end{definition}

A generalization of the standard coproduct was introduced in \cite{CoproductYangian}, called shifted coproducts. 

\begin{theoreme}\cite[Corollary 3.16, Theorem 4.12, Proposition 4.14]{CoproductYangian}
    \begin{enumerate}
        \item For $\epsilon, \eta$ antidominant coweights, the shift homomorphisms $\iota^\mu_{\epsilon,\eta} : Y_{\mu}  \longrightarrow  Y_{\mu + \epsilon + \eta}$ defined by: 
        \[ x^+_{i,n} \mapsto  x^+_{i,n-\langle\epsilon,\alpha_i\rangle}; \quad 
              x^-_{i,n} \mapsto  x^-_{i,n-\langle\eta,\alpha_i\rangle}; \quad
               \xi_{i,p} \mapsto  \xi_{i,p - \langle \epsilon +\eta,\alpha_i\rangle} \]
        are injective.
        \item There exists a unique family of algebra homomorphisms
        \[ \Delta_{\mu,\nu} : Y_{\mu + \nu} \longrightarrow Y_\mu\otimes Y_\nu \]
        for all $\mu,\nu \in \Z$ such that $\Delta_{0,0}$ is the coproduct of the standard Yangian, 
        \[ \Delta_{\mu,\nu}(x^+_{i,n}) = x^+_n \otimes 1 \text{ if } n < -\langle\mu,\alpha_i\rangle, \quad \Delta_{\mu,\nu}(x^-_{i,n}) = 1 \otimes  x^-_{i,n} \text{ si } n < - \langle\nu,\alpha_i\rangle  \] and the following diagrams commutate for $\epsilon, \eta$ antidominants :
        \[ \xymatrix@R+1pc@C+2pc{
    Y_{\mu + \nu} \ar[r]^{\Delta_{\mu,\nu}} \ar[d]^{\iota^{\mu + \nu}_{\epsilon,\eta}} & Y_\mu \otimes Y_\nu \ar[d]^{\iota^\mu_{\epsilon,0} \otimes \iota^\nu_{0,\eta}} \\
    Y_{\mu + \nu + \epsilon + \eta} \ar[r]^{\Delta_{\mu + \epsilon, \nu + \eta}} & Y_{\mu + \epsilon}\otimes Y_{\nu + \eta}} 
    \]
    \[ \xymatrix@R+1pc@C+2pc{
    Y_{\mu + \epsilon + \nu} \ar[r]^{\Delta_{\mu + \epsilon,\nu}} \ar[d]^{\Delta_{\mu, \epsilon + \nu}} & Y_{\mu + \epsilon} \otimes Y_\nu \ar[d]^{ \Delta_{\mu,\epsilon} \otimes \Id} \\
    Y_{\mu} \otimes Y_{\nu + \epsilon} \ar[r]^{\Id \otimes \Delta_{\epsilon, \nu}} & Y_{\mu}\otimes Y_\epsilon \otimes Y_{\nu}} 
    \]
    \end{enumerate}
\end{theoreme}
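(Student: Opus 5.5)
The plan is to follow the strategy of \cite{CoproductYangian} in three stages: injectivity of the shift maps, construction of $\Delta_{\mu,\nu}$ in the antidominant case, and extension to arbitrary coweights.

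\textbf{Injectivity of the shift maps.} I would use the PBW theorem for shifted Yangians. Fix a total order on ordered monomials in the root elements $x^\pm_{\beta,m}$ (built as iterated commutators of the $x^\pm_{i,m}$) and the $\xi_{i,p}$ with $p\geq -m_i$. These monomials form a basis of $Y_\mu$. One proves this by filtering $Y_\mu$ by the degree $\deg x^\pm_{i,m}=\deg \xi_{i,m}=m$ and identifying the associated graded algebra with a polynomial-type algebra of functions on a transversal slice / Lie algebra of $\mathfrak g[t]$-type, where spanning follows from the defining relations. The map $\iota^\mu_{\epsilon,\eta}$ only shifts indices: since $\epsilon,\eta$ are antidominant, $-\langle\epsilon,\alpha_i\rangle\geq 0$, so $x^+_{i,n}$ is sent to a generator of higher index. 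Up to lower filtration terms, $\iota^\mu_{\epsilon,\eta}$ therefore sends PBW monomials to distinct PBW monomials of $Y_{\mu+\epsilon+\eta}$, so it is injective already on the associated graded level.

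\textbf{Antidominant case.} For $\mu,\nu$ antidominant, the first diagram with $\epsilon=\mu$, $\eta=\nu$ and base $Y_0$ forces
\[\Delta_{\mu,\nu}\circ\iota^0_{\mu,\nu}=(\iota^0_{\mu,0}\otimes\iota^0_{0,\nu})\circ\Delta_{0,0}.\]
This fixes $\Delta_{\mu,\nu}$ on the image of $\iota^0_{\mu,\nu}$. On the remaining generators I would set:
\begin{enumerate}
\item $x^+_{i,n}\mapsto x^+_{i,n}\otimes 1$ for $n<-\langle\mu,\alpha_i\rangle$;
\item $x^-_{i,n}\mapsto 1\otimes x^-_{i,n}$ for $n<-\langle\nu,\alpha_i\rangle$;
\item $\xi_{i,p}$ via $\xi_{i,p}=[x^+_{i,m},x^-_{i,p-m}]$.
\end{enumerate}
Since $Y_{\mu+\nu}$ is generated by these elements, uniqueness is immediate. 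It remains to check that the assignment is well defined and respects the relations. For this I would use the explicit formula for $\Delta_{0,0}(x^\pm_{i,1})$ and $\Delta_{0,0}(\xi_{i,1})$ obtained from the J-presentation via Theorem \ref{isopresentations}. The point is that the correction terms $\frac12[x\otimes 1,\Omega]$ involve only $x^-\otimes x^+$ type pieces. These pieces land in the correct index ranges after shifting, so the mixed relations between low generators and shifted images reduce to those already holding in $Y_0\otimes Y_0$.

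\textbf{General coweights and compatibility.} For arbitrary $\mu,\nu$, I would choose antidominant $\epsilon,\eta$ with $\mu+\epsilon$ and $\nu+\eta$ antidominant, and define $\Delta_{\mu,\nu}$ as the unique map making the first diagram commute. Existence amounts to checking that $(\iota^\mu_{\epsilon,0}\otimes\iota^\nu_{0,\eta})^{-1}\circ\Delta_{\mu+\epsilon,\nu+\eta}\circ\iota^{\mu+\nu}_{\epsilon,\eta}$ lands in $Y_\mu\otimes Y_\nu$. This can be verified on the generators $x^\pm_{i,0}$, $x^\pm_{i,1}$, $\xi_{i,p}$ (which generate together with $\xi_{i,1}$). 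Injectivity of the shift maps then makes the definition well posed and independent of $(\epsilon,\eta)$. Coassociativity (the second diagram) reduces, again by injectivity of shifts, to the antidominant case and hence to coassociativity of $\Delta_{0,0}$.

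\textbf{Main obstacle.} I expect the hardest step to be the range check in the extension to general coweights: showing that the image of the shifted generators of degree $0$ and $1$ really lies in the tensor product of the smaller shifted Yangians. Here I would first try to compute $\Delta_{\mu+\epsilon,\nu+\eta}(\xi_{i,p})$ modulo the ideal spanned by terms of positive weight in the second factor, using the commutator $[x^+,x^-]$ and the explicit degree-$1$ formulas.
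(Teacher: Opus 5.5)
The paper gives no proof of this statement: it quotes it from \cite{CoproductYangian} and uses it as a black box in the appendix, so your proposal has to stand on its own. Its architecture is the right one. Injectivity of the shifts comes from a PBW basis. For antidominant $\mu,\nu$, $\Delta_{\mu,\nu}$ is pinned down by its values on $\iota^0_{\mu,\nu}(Y_0)$ and on the low modes. General $\mu,\nu$ are obtained by pulling back along injective shifts. The uniqueness assertions follow as you say.

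The genuine gap is \emph{existence} in the antidominant case, i.e.\ that your assignment respects the defining relations of $Y_{\mu+\nu}$; the reason you give does not establish it. The low modes $x^+_{i,n}$, $n<-\langle\mu,\alpha_i\rangle$, do not lie in the image of $\iota^0_{\mu,0}$. So, for instance, the relation $[x^+_{i,n},x^-_{i,m}]=\xi_{i,n+m}$ with $m\geq -\langle\nu,\alpha_i\rangle$ becomes
\[ \bigl[x^+_{i,n}\otimes 1,\ (\iota^0_{\mu,0}\otimes\iota^0_{0,\nu})\,\Delta_{0,0}\bigl(x^-_{i,m+\langle\nu,\alpha_i\rangle}\bigr)\bigr]=\Delta_{\mu,\nu}(\xi_{i,n+m}). \]
This is an identity in $Y_\mu\otimes Y_\nu$ that is not the image of any identity in $Y_0\otimes Y_0$: it uses relations specific to $Y_\mu$, such as $\xi_{i,-\langle\mu,\alpha_i\rangle-1}=1$. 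It also involves $\Delta_{0,0}(x^-_{i,k})$ for arbitrarily large $k$, for which no closed formula is available. In addition, the corrections are not purely of type $x^-\otimes x^+$: $\Delta_{0,0}(x^+_{i,1})$ contains $\xi_{i,0}\otimes x^+_{i,0}$.

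What is missing is a reduction to finitely many relations. One option is a presentation of $Y_{\mu+\nu}$ by generators of bounded mode (a Levendorskii-type presentation). Another is an induction on the mode index using $\ad$ of $\iota^0_{\mu,\nu}(\xi_{i,1})$, whose coproduct is forced and explicit. Either way, you must then actually verify those relations using the degree $\leq 1$ formulas for $\Delta_{0,0}$ (as in \cite{GNW}). That verification is the substance of the theorem, and it is absent from your proposal.

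The later steps are also asserted rather than proved. You leave the range check for general $\mu,\nu$ open yourself; it needs the same kind of generator-level formulas and is not a formality.

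Coassociativity does not reduce to that of $\Delta_{0,0}$ through the first diagram. The middle factor $Y_\epsilon$ is the second factor of $\Delta_{\mu,\epsilon}$, which only absorbs $x^-$-shifts, and the first factor of $\Delta_{\epsilon,\nu}$, which only absorbs $x^+$-shifts. So the middle shift cannot be removed. Concretely, the first diagram gives $\Delta_{\mu,\epsilon}\circ\iota^0_{\mu+\epsilon,0}=(\iota^{-\epsilon}_{\mu+\epsilon,0}\otimes\Id)\circ\Delta_{-\epsilon,\epsilon}$. Hence, on $\iota^0_{\mu+\epsilon,\nu}(Y_0)$ and by injectivity, coassociativity for $(\mu,\epsilon,\nu)$ is equivalent to coassociativity for the mixed triple $(-\epsilon,\epsilon,0)$, not for $(0,0,0)$. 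That case, and the low modes, must be checked directly on generators.

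Finally, identifying the associated graded of $Y_\mu$ with functions on the slice is exactly the linear-independence half of the PBW theorem for arbitrary $\mu$. You assume it rather than prove it, which is acceptable only as a citation. Moreover, since $\iota^\mu_{\epsilon,\eta}$ raises degrees non-uniformly, your leading-term argument for injectivity needs a weight-adjusted filtration.
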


Then, we need to study the intertwining properties introduced in \cite[5.3]{Zhang}. Let us briefly recall some notions from the representation theory of shifted Yangians. 

Fix a coweight $\mu = \sum\limits_{i \in I}m_i\omega_i^\vee$  and $V$ a $Y_\mu$-module. For a $Y_\mu$-module $V$, we consider the polynomial space $V[z]$. It is a module over $Y_\mu[z]$ by scalar extension. Taking its pullback along
\[ \begin{array}{ccccc}
     \tau_z &: & Y_\mu[z] &\to & Y_\mu[z]   \\
     & & x_{i,p}^\pm & \mapsto & \sum\limits_{n\in\N}\binom{p}{n} x_{i,p-n}z^n \\  
     & & \xi_{i,p}^\pm & \mapsto & \sum\limits_{n\in\N}\binom{p}{n} \xi_{i,p-n}z^n 
\end{array}\]
gives a $Y_\mu[z]$-module, denoted $V_z$ and called deformed module. The parameter $z$ is called the spectral parameter.

Let us briefly recall the basic properties of the representation theory of shifted Yangians from \cite{Zhang}. The set $\mathcal{L}_\mu$  of \textbf{$\ell$-weight of coweight $\mu$} is the set of $I$-tuple $\textbf{P} = (P_i(z))_{i\in I} \in \C((z^{-1}))^I$ where the leading term of each $P_i(z)$ is $z^{m_i}$. One can define the associated Verma module $M(\textbf{P)}$ as the $Y_\mu$-module generated by a vector $v_0$ such that
\[ \xi_i(z) \cdot v_0 = P_i(z)v_0; \quad x_{i}^+(z) \cdot v_0 = 0 \text{ for } i \in I.\] 
It admits a unique irreducible quotient, denoted $L(\textbf{P})$ and called \textbf{highest $\ell$-weight representation}. For $i \in I$, define the \textbf{$i$-th prefundamental module} $L_i$ as the highest $\ell$-weight module of $Y_\mu$ whose $\ell$-weight is $\Psi_i := \po z^{\delta_{ij}} \pf_{j\in I}$.

Now, let us follow the constructions of \cite{Zhang}, with $\mu = \nu = 0$ and the $\ell$-weight of coweight $\zeta = \omega_i^\vee$ $\textbf{p} = \Psi_i$. For $M,N$ two root-graded $Y$module, the completed tensor product of $M$ and $N$ is defined by:
\[ M \otimes_z N := \sum\limits_{\alpha,\beta \in Q} \po \prod\limits_{\gamma \in Q_+} \po M_{(\alpha-\gamma)} \otimes N_{(\beta+\gamma)} \pf((z^{-1})) \pf.\]
It contains the $Y \otimes Y[z]$-module $M\otimes N[z]$. Note that $\Theta_i(z) \in Y\otimes_zY$.

Now, consider the algebra homomorphism
\[ \begin{array}{ccccc}
     \pi_{i,z} & : & Y_{\omega_i^\vee} & \to & \C[z]  \\
     & & x_{j}^\pm(w) & \mapsto & 0 \\
     & & \xi_j(w) &  \mapsto & (w-z)^{\delta_{ij}}
\end{array}. \]
The deformed module $L_i(z)$ is then the pullback of the regular representation $\C[z]$ along this homomorphism.

We introduce two algebra homomorphisms $F_i$ and $G_i$ from $Y_{\omega_i^\vee}$ to $Y[z]$:
\begin{gather*}
    F_i := \po \pi_{i,z} \otimes \Id \pf \Delta_{\omega_i^\vee,0}; \quad G_i := \po \Id \otimes \pi_{i,z} \pf \Delta_{0,\omega_i^\vee}
\end{gather*}
where their actions on the generators of $Y$ are given by \cite[Example 3.4]{Zhang}:
\begin{gather*} 
F_i: x_j^+(w) \mapsto (w-z)^{\delta_{ij}}x_j^+(w); \quad x_j^-(w) \mapsto x_j^-(w); \quad \xi_j(w) \mapsto (w-z)^{\delta_{ij}}\xi_j(w)\\
G_i: x^+_j(w) \mapsto x_j^+(w); \quad x_j^{-}(w) \mapsto (w-z)^{\delta_{ij}}x_j^-(w); \quad \xi_j(w) \mapsto (w-z)^{\delta_{ij}}\xi_j(w).
\end{gather*}

Finally, consider two root graded $Y$-modules $M,N$. Taking the pullback of $M\otimes N[z]$ along the homomorphism $(G_i \otimes \Id) \Delta_{\omega_i^\vee,0}$ gives a $Y_{\omega_i^\vee}$-module $\po M\otimes (L_i)_z \pf \otimes N$ and taking the pullback of $M\otimes_zN$ along the same homomorphism gives a $Y_{\omega_i^\vee}$-module that is a completion of the previous one, denoted $\overline{\po M \otimes (L_i)_z\pf \otimes N}$. Similarly, with the homomorphism $(\Id\otimes F_i)\Delta_{0,\omega_i^\vee}(x)$, one can construct the completion $ \overline{M\otimes \po (L_i)_z \otimes N \pf}$. According to \cite[theorem 5.6]{Zhang}, the action of $\Theta_i(z)$ on $M \otimes_z N$ defines a morphism of $Y_{\omega_i^\vee}$-modules between $\overline{\po M \otimes (L_i)_z\pf \otimes N} \to \overline{M\otimes \po (L_i)_z \otimes N \pf}$ where an element $ x \in Y_{\omega_i^\vee}$ acts on both modules by:
\[ \po G_i \otimes \Id  \pf \Delta_{\omega_i^\vee,0}(x) \text{ and } (\Id\otimes F_i)\Delta_{0,\omega_i^\vee}(x) \]
respectively.

We check the commutation of $\Theta_i(z)$ with the actions of the generators $x_{j,0}^+$ for $j \in I$. If $i \neq j$, then the coproducts are given by
\[ \Delta_{\omega_i^\vee,0}(x_{j,0}^+) = \Delta_{0,\omega_i^\vee}(x_{j,0}^+) = x_{j,0}^+\otimes 1 + 1 \otimes x_{j,0}^+. \]
Then, noticing that $F_i$ and $G_i$ let the generators $x^+_{j}(z)$ invariant for $j \neq i$, the commutation relation can be written
\[ \co x_{j,0}^+ \otimes 1 + 1 \otimes x_{j,0}^+, \Theta_i(z) \cf = 0.\]

We want to compute $\Delta_{\omega_i^\vee,0}(x_{i,0}^+)$ and $\Delta_{0,\omega_i^\vee}(x_{i,0}^+)$. From our definition of the coproduct, it is clear that $\Delta_{0,\omega_i^\vee}(x_{i,0}^+) = 1 \otimes x_{i,0}^+ + x_{i,0}^+ \otimes 1$. For the second coproduct, we use a standard zigzag argument. The commutation properties of the definition of the shifted coproducts give, with $\mu = \omega_i^\vee;   \nu = \eta = 0$ and $\epsilon = -\omega_i^\vee$:
\[ \po \iota_{-\omega_1^\vee,0}^{\omega_1^\vee} \otimes \iota^0_{0,0} \pf \Delta_{\omega_1^\vee,0}(x_{i,0}^+) = \Delta \po \iota_{-\omega_1^\vee,0}^{\omega_1^\vee}(x_{i,0}^+) \pf.\]
But we know $\iota_{-\omega_1^\vee,0}^{\omega_1^\vee}(x_{i,0}^+) = x_{i,1}^+$. From \cite{GNW}, we have:
\[ \Delta(x_{i,1}^+) = x_{i,1}^+ \otimes 1 + 1 \otimes x_{i,1}^+ + h_{i,0} \otimes x_{i,0}^+ - \sum\limits_{j=i+2}^n E_{j,i+1} \otimes E_{ij} + \sum\limits_{k=1}^{i-1} E_{ik}\otimes E_{k,i+1}.\]
Moreover, one can check:
\[ \po \iota_{-\omega_1^\vee,0}^{\omega_1^\vee} \otimes \iota^0_{0,0} \pf  \po x_{i,0}^+\otimes 1 \otimes x_{i,1}^+ + \xi_{i,-1} \otimes x_{i,0}^+ + \sum\limits_{j=1}^{i-1} E_{i,j}\otimes E_{j,i+1} - \sum\limits_{k=i+2}^{n+1}E_{k,i+1} \otimes E_{i,k} \pf = \Delta(x_{i,1}^+).\]
Then, by the injectivity of the shift morphisms:
\[
\Delta_{\omega_i^\vee,0}(x_{i,0}^+) = x_{i,0}^+\otimes 1 \otimes x_{i,1}^+ + \xi_{i,-1} \otimes x_{i,0}^+ + \sum\limits_{j=1}^{i-1} E_{i,j}\otimes E_{j,i+1} - \sum\limits_{k=i+2}^{n+1}E_{k,i+1} \otimes E_{i,k}.\]

So, the generator $x_{i,0}^+$ acts on both modules of \cite[Theorem 5.6]{Zhang} by
\[ x_{i,0}^+  \otimes 1 + 1\otimes x_{i,1}^+ - z \otimes x_{i,0}^+ - \sum\limits_{k=i+1}^nE_{k,i+1}\otimes E_{i,k} + \sum\limits_{j= 1}^{i} E_{i,j} \otimes E_{j,i+1}  \] and
\[ x_{i,0}^+ \otimes 1 + 1\otimes x_{i,1}^+ -  z(1 \otimes x_{i,0} ^+). \]

Since $\Theta_i(z)$ is a module homomorphism, it has to commute with these actions. Then:
\begin{align*} \Theta_i(z) \po  x_{i,0}^+  \otimes 1 + 1\otimes x_{i,1}^+ - z \otimes x_{i,0}^+ - \sum\limits_{k=i+1}^nE_{k,i+1}\otimes E_{i,k} + \sum\limits_{j= 1}^{i} E_{i,j} \otimes E_{j,i+1} \pf \\= \po x_{i,0}^+ \otimes 1 + 1\otimes x_{i,1}^+ -  z (1 \otimes x_{i,0} ^+) \pf \Theta_i(z)
\end{align*}
which can be written as the second formula in proposition \ref{intertwiningyangian}.

Finally, to conclude with the uniqueness of the solution of this system with $\Theta_{i,0} = 1$, we use an argument similar to \cite[section 4.2]{GWT} and \cite[theorem 5.7, step 2]{Zhang}. Note that by projecting the previous equation on $Y_{\alpha_i -\beta}\otimes Y_{\beta}$ for $\beta \in Q_+$, one obtains for $i\neq j$:
\[ \co x_{j,0}^+ \otimes 1 + 1 \otimes x_{j,0}^+, \Theta_{i,\beta}(z) \cf  = 0 \]
and, on the other hand:
\begin{align*} \co x_{i,0}^+ \otimes 1, \Theta_{i,\beta}(z) \cf = &- \co 1\otimes x_{i,1}^+ -z(1\otimes x_{i,0}^+), \Theta_{i,\beta-\alpha_i}(z)\cf + \sum\limits_{j=1}^{i-1} \Theta_{i,\beta - \gamma_j - \alpha_i}(z) \po E_{i,j}\otimes E_{j,i+1}\pf \\ &- \sum\limits_{k = i+2}^{n+1} \Theta_{i,\beta - \eta_k - \alpha_i } (z)\po E_{k,i+1} \otimes E_{i,k} \pf + \Theta_{i,\beta- \alpha_i} (z) \po h_{i,0} \otimes x_{i,0}^+ \pf
\end{align*}
where $\gamma_j = \alpha_j + \dots + \alpha_{i-1}$ and $\eta_k = \alpha_{i+1} + \dots + \alpha_k$. Define the height of a weight $\beta$ by $h(\beta) = \langle \beta,\sum\limits_{i\in I}\omega_i^\vee \rangle \in \N$. Then, by induction on $h(\beta)$, $\Theta_i(z)$ is uniquely determined by the fact that $\Theta_{i,0}(z) = 1\otimes 1$.

\bibliography{biblio.bib}
\bibliographystyle{plain}
\end{document}